\documentclass{article}
\usepackage[margin=1in]{geometry}
\PassOptionsToPackage{numbers, compress}{natbib}

\usepackage[utf8]{inputenc} 
\usepackage[T1]{fontenc}    
\usepackage{hyperref}       
\usepackage{url}            
\usepackage{booktabs}       
\usepackage{amsfonts}       
\usepackage{nicefrac}       
\usepackage{microtype}      
\usepackage{xcolor}         
\usepackage{enumitem}       
\usepackage{authblk}

\usepackage{subcaption}
\usepackage{tikz}
\usetikzlibrary{arrows.meta,positioning,calc}
\usepackage{amsmath,amsthm,amssymb}

\newcommand{\sqd}{\mathsf{dev}}

\newcommand{\diag}{\mathsf{diag}}
\newcommand{\diff}{\mathsf{diff}}
\newcommand{\Var}{\mathbb{V}}  
\newcommand{\tr}{\mathsf{tr}}

\newtheorem{theorem}{\bf Theorem}[section]

\newtheorem{definition}[theorem]{\bf Definition}
\newtheorem{observation}[theorem]{\bf Observation}
\newtheorem{proposition}[theorem]{\bf Proposition}

\newtheorem{remark}[theorem]{\bf Remark}

\title{Measuring Spatial Clustering via Metropolis-Hastings Diffusion Distance}

\author[1]{Thomas Weighill}
\author[1]{Chidinma Williams}
\affil[1]{Department of Mathematics and Statistics, University of North Carolina at Greensboro, Greensboro, NC 27402}

\begin{document}

\maketitle

\begin{abstract}

We propose a novel measure of the discrepancy between two probability distributions $f$ and $g$ on a graph -- which we call the diffusion distance -- that measures the rate of convergence of $f$ to $g$ under a graph-constrained Markov chain with stationary distribution $g$. As a default choice for this Markov chain, we use the Metropolis-Hastings transition matrix targeting $g$ with proposals given by a random walk on the graph. Our primary case of interest is when the second distribution $g$ is uniform, in which case the diffusion distance becomes a measure of spatial clustering in $f$. Used in this way, (Metropolis-Hastings) diffusion distance to uniformity extends Moran's $I$-type measures of spatial autocorrelation by incorporating global graph geometry rather than just local patterns. Indeed, Moran's $I$, the most well-known measure of spatial autocorrelation, can be viewed as a one-step heuristic for diffusion distance, so long as specific spatial weights are used. We establish theoretical bounds and a stability result for our measure, connecting it to graph spectra and optimal transport. We then turn our attention to outlining a statistical test for spatial clustering using diffusion distance. Under permutation null models, we derive high-probability bounds on diffusion distance underpinned by exact spectral formulas for convergence of distributions, enabling an efficient statistical test for spatial clustering on large datasets. We empirically compare diffusion distance to Moran's $I$ both as a numerical measure and as a statistical test. We show that diffusion distance exhibits higher power on synthetic data using a stochastic block model. Empirical analysis of Black population distributions for 100 U.S. cities shows that diffusion distance detects subtle differences in urban segregation patterns that Moran's $I$ does not. 
\end{abstract}

\section{Introduction}

A fundamental phenomenon in the analysis of spatial data is that data values can be influenced by their location in relation to other data values. This phenomenon pervades ecology, geography, economics, public health, and urban planning, among other fields. Ignoring this influence and assuming spatial independence can inflate Type I error rates, yield biased regression estimates, and lead to erroneous interpretations of variable relationships~\cite{Legendre1993,LeSage1998}. The most common form of spatial dependence studied in the statistics literature is the dependency of data between adjacent (or very nearby) locations. Spatial autocorrelation quantifies this dependency by measuring correlations between neighboring locations~\cite{Anselin1988}. Positive autocorrelation occurs when similar values cluster together, while negative autocorrelation is associated with checkerboard-like arrangements ~\cite{CliffOrd1973, Getis2008}. Quantifying spatial autocorrelation enables more accurate analyses and better-informed policy decisions across diverse applications~\cite{Anselin1988,Pfeiffer2008}. Scalar indices like Moran's $I$~\cite{Moran1948} are a common way to quantify or test for spatial autocorrelation.

Focusing on the relationship between neighboring data values, however, restricts attention only to local patterns. Large scale patterns, such as the interplay between the global connectivity of the underlying geography and the data, can go undetected unless spatial weights are specifically calibrated to incorporate the correct scale of interactions. In this paper, we address this by introducing a new method to detect spatial dependence in data. Our jumping off point is the observation in ~\cite[Theorem 7.1]{DuchinMurphyWeighill} that Moran's $I$ with a particular choice of spatial weights corresponds to how close the data is to uniformity after one step of a Markov chain. This immediately suggests an expansion of Moran's $I$ -- namely, measuring convergence to uniformity over longer runs of a suitable Markov chain. In other words, we model spatial attributes as probability distributions evolving under natural Markov chain dynamics on graphs, and use convergence speed as a measure of spatial clustering. 

Beginning in the most general context, we introduce the diffusion distance $\diff_{p,\epsilon}(f \to g)$, which measures the minimal steps required for a source distribution $f$ on a graph $G$ to converge to within $\ell_p$ tolerance $\epsilon$ of a target distribution $g$ under a Markov transition matrix designed to target $g$ as stationary distribution. A natural choice for this transition matrix, which we use throughout, is the Metropolis-Hastings transition matrix whose proposal probabilities are given by a random walk on $G$. Our main case of interest is when $g$ is uniform (or captures uniformity in some way). In this setting, slow mixing from $f$ reflects strong spatial clustering in $f$, while fast convergence indicates near-uniformity of $f$. Notably, diffusion distance depends on the global geometry of the underlying graph, rather than just local correlations. 

We begin with the theoretical study of diffusion distance and its connections to other important concepts in spatial data analysis. We establish bounds in terms of graph spectra, and in terms of Wasserstein distance on graphs. We prove a stability result controlling the effect of small changes in $f$ on $\diff_{p,\epsilon}(f \to g)$. We also make the connection to Moran's $I$ mentioned above explicit.

In order to use diffusion distance (to uniformity) for statistical tests, we require expectations under various null models similar to those for Moran's $I$. Under a random permutation null model, we derive the exact expectation and variance of the $n$-step reduction in squared deviation, resulting in a formula for bounding the \textit{p}-value of an observed diffusion distance in terms of the spectrum of the underlying graph. This allows us to avoid sampling a prohibitive number of random permutations to obtain a \textit{p}-value for large datasets. We empirically demonstrate the power of the diffusion distance test on synthetic graphs generated using a stochastic block model (SBM).

Beyond statistical tests, we also demonstrate diffusion distance as a way to quantify spatial clustering patterns. We compute diffusion distances for 100 U.S.~cities using Black population distributions as our data of interest; in this context, spatial clustering corresponds to spatial sorting and/or segregation. We compare to Moran's $I$ as a benchmark, and highlight specific instances where the two measures agree and diverge respectively.

\subsection{Related work}
\label{sec:related}

\paragraph{Other measures of spatial autocorrelation.}
The dominant tools for quantifying spatial dependence are Moran's $I$, Geary's $C$, and the family of local indicators of spatial association (LISA)~\cite{Moran1950,Geary1954,Anselin1995}. These statistics enjoy clear interpretations and closed-form inference under various null models, making them the standard benchmarks in spatial analysis. Moran's $I$ serves as our benchmark in this paper, and can be interpreted as a one-step heuristic for diffusion distance (Theorem~\ref{thm:moran-sqd}). Our approach departs from these classical statistics by measuring diffusion under global graph dynamics rather than local correlations. A broad literature enriches Moran's $I$-type scalar summaries by capturing spatial heterogeneity through eigenvector spatial filtering, geographically weighted regression, Bayesian spatial models, and network-based methods~\cite{CliffOrd1973,CliffOrd1981,Tiefelsdorf1998,Fotheringham2002, Gelfand2003,Griffith2003,Chun2016,Thayn2017,Fotheringham2017}. Our method adds to this literature by introducing a measure which is readily interpretable in terms of graph dynamics and which has closed-form theoretical results under random permutation models.


\paragraph{Spectral graph theory.}
The study of graphs via the spectra of graph Laplacians (and other operators) is well-established and has been deployed in many data science applications such as spectral clustering, diffusion maps~\cite{CoifmanLafon2006}, and graph diffusion distance~\cite{hammond2013graph, vayer2019optimal} (a measure of similarity between two graphs, and not a distance between distributions on a common graph as defined in this paper). Graph spectra are also intimately related via classical results to mixing times for random walks. We exploit this connection to prove results about convergence from a specific initial distribution $f$ rather than worst-case mixing bounds, so that our measure captures not only information about a graph, but also about how the given distribution $f$ interacts with the global geometry of that graph (see Theorem~\ref{thm:diff-upper-bound-ergodic}).

 \paragraph{Segregation measurement.}
Quantifying racial and socioeconomic segregation in cities has motivated a rich methodological literature~\cite{massey1988dimensions,reardon2004measures}. Classical indices such as the dissimilarity index and isolation index capture global separation but are insensitive to the spatial arrangement of groups~\cite{wong2003spatial}. Spatially explicit segregation measures attempt to incorporate geographic contiguity and local mixing~\cite{reardon2004measures,wong2003spatial,knaap2020segregation}. Our framework contributes to this literature by providing a mixing-time interpretation of segregation: a city is more segregated insofar as its demographic distribution takes longer to equilibrate under local graph diffusion. This reinterpretation connects segregation measurement directly to the spectral geometry of the city's adjacency graph.
Our work is similar in spirit to measures of segregation that attempt to measure the propensity of members of one class to encounter another class~\cite{van2014random, sousa2022quantifying}, but there are qualitative differences. For example, locally heterogenous population distributions with a global tilt result in many encounters, which may hide the overall trend. This might be less of an issue when measuring segregation, but can be a problem for statistical inference.

\paragraph{Optimal transport and geometry-aware comparison.}
Optimal transport provides a geometry-aware way to compare probability distributions by quantifying the cost of redistributing mass~\cite{Peyre2016,Villani2008}. Recent spatial applications include spatio-temporal transport~\cite{janati2020spatio}, and geometry-aware segregation measures~\cite{PengMurphy2025}. Our approach is complementary: rather than asking how costly it is to optimally move mass from $f$ to $\tau$, we ask how many steps of graph-constrained diffusion are required. 

\subsection{Limitations}

Our target application is geospatial data, and like almost all geospatial data analysis techniques, our method is vulnerable to the Modifiable Areal Unit Problem (MAUP), wherein reaggregation of data by different geographic units results in different outputs. The fact that our measure includes a diffusion process suggests we are less vulnerable to the ``checkerboard problem'' (a reaggregation of an checkerboard pattern can suddenly become uniform). Another limitation is that we do not attempt to model actual population dynamics, instead relying on random walks on graphs as an approximation, leaving aside many factors such as public transport availability or traffic patterns. Notably, our measure depends heavily on the geometry of geographic adjacency graphs, which is understudied (but see the very recent~\cite{anderson2026census}). 


\section{Preliminaries}
\label{sec:preliminaries}

We denote by $\Delta_N$ the \emph{probability simplex} $\Delta_N = \{\mathbf{v} \in \mathbb{R}^{N} \mid \forall_i \mathbf{v}_i \geq 0,\ \sum_i \mathbf{v}_i = 1\}$. Since all our state spaces are finite, probability distributions are elements of $\Delta_N$. Vectors will be row vectors throughout this paper, and we denote by $\mathbf{1}$ the all-ones vector of the appropriate size. We denote the uniform distribution $\frac{1}{N} \mathbf{1}$ by $\tau$ throughout. A matrix $M$ is \emph{row-stochastic} if $M \mathbf{1}^\top = \mathbf{1}^\top$, and \emph{bi-stochastic} if in addition $\mathbf{1}M = \mathbf{1}$.

\subsection{Spatial autocorrelation}
\label{subsec:moran_i}

\emph{Moran's $I$} is a classical global measure of spatial autocorrelation that quantifies whether nearby observations tend to have similar values~\cite{Moran1948,Anselin1995}. Let $\mathcal{V}=\{v_1,\dots,v_N\}$ denote observations with mean $\bar v=\frac{1}{N}\sum_{i=1}^N v_i$, and let $W=[w_{ij}]$ be a nonnegative spatial weight matrix. Then Moran's $I$ is
\begin{equation}
I
=
\frac{N}{\sum_{i=1}^N\sum_{j=1}^N w_{ij}}
\cdot
\frac{\sum_{i=1}^N\sum_{j=1}^N w_{ij}(v_i-\bar v)(v_j-\bar v)}
{\sum_{i=1}^N (v_i-\bar v)^2}.
\label{eq:morans_i}
\end{equation}
Positive values indicate positive spatial autocorrelation, negative values indicate dispersion, and values near zero indicate weak global structure~\cite{Anselin1995}. Under the randomization null and traceless spatial weights matrix,
$
\mathbb{E}[I] = -\frac{1}{N-1},
$
providing the classical baseline for spatial randomness~\cite{CliffOrd1981,MoodGraybillBoes1974}. A common default choice for the spatial weights, which we use throughout the paper, are row-standardized weights based on an adjacency graph: 
$$w_{ij} = \begin{cases}
    \frac{1}{\deg i} & \text{$i$ and $j$ are adjacent and $i \neq j$} \\
    0 & \text{otherwise}
\end{cases}$$ 

\subsection{Markov chains and Metropolis-Hastings}

Let $P\in\mathbb{R}^{N\times N}$ be a row-stochastic transition matrix on a finite state space with $N$ states. For a distribution $f\in \Delta_N$ modeled as a row vector, the distribution after $n$ steps is $fP^n$. A distribution $\tau$ is \emph{stationary for $P$} if $\tau=\tau P$. When $P$ is irreducible and aperiodic on a finite state space, it admits a unique stationary distribution $\tau$, and $fP^n\to\tau$ as $n\to\infty$ for any initial distribution $f$. Given a transition matrix $Q$ (referred to as a \emph{proposal} chain) and a target distribution $g$, we can define the \emph{Metropolis-Hastings Markov chain} $P$ whose transition probability from $i$ to $j \neq i$ is given by 
$$
P_{ij} = \min \left(Q_{ij}, \frac{Q_{ji} g_j}{g_i} \right).
$$
Practically speaking, this amounts to proposing a new state using $Q$ and accepting with probability proportional to the proposed change in $g$. By design, $g$ is stationary for $P$. Metropolis-Hastings Markov chains were originally designed to sample from complicated distributions known only up to a constant factor. We borrow this construction for a different purpose, namely to construct a canonical Markov chain with specified equilibrium distribution. Note that by~\cite[Theorem 1]{billera2001geometric}, we can view the Metropolis-Hastings Markov chain as an optimal approximation to the proposal chain $Q$ with specified stationary distribution. 

\subsection{Optimal transport and total variation distance}
Given probability measures $\mu=\sum_{i=1}^n a_i\delta_{v_i}$ and $\nu=\sum_{j=1}^m b_j\delta_{u_j}$ on a finite metric space $(X,d)$, the $p$-Wasserstein distance measures the cost to move $\mu$ to $\nu$, and is given by
\[
W_p(\mu,\nu)
=
\left(
\min_{\pi\in\Pi(\mu,\nu)}
\sum_{i=1}^n\sum_{j=1}^m d(v_i,u_j)^p\,\pi_{ij}
\right)^{1/p},
\]
where $\Pi(\mu,\nu)$ denotes the set of couplings with marginals $\mu$ and $\nu$~\cite{Kantorovich1939,Vaserstein1969}. Recall that the total variation distance between $\mu$ and $\nu$ is ,
$
\|\mu-\nu\|_{\mathrm{TV}}
=
\frac12\sum_{i=1}^N |\mu(v_i)-\nu(v_i)|.
$
On spaces of diameter $D$, these quantities satisfy the following:
$
\label{eq:wass-tv-diameter}
W_1(\mu,\nu)\le 2D\,\|\mu-\nu\|_{\mathrm{TV}}
$ (see \cite{Villani2008}).

\section{(Metropolis-Hastings) diffusion distance}
\label{sec:method}

We begin with the general concept of the diffusion distance from one probability distribution to another on a common state space.

\begin{definition}[Diffusion distance in $\ell_p$-norm]
\label{def:diff-distance}
Let $f, g \in \Delta_N$ be probability distributions on a finite state space, let $P$ be a
Markov transition matrix, let $1 \le p < \infty$, and let $\epsilon > 0$.
The \emph{$\ell_p$-diffusion distance} from $f$ to $g$ is
\[
  \diff^P_{p,\epsilon}(f \to g)
  := \min\bigl\{ n \in \mathbb{N}_0 : \|fP^n - g\|_p < \epsilon \bigr\}.
\]
\end{definition}

Despite the name, diffusion distance is not a metric. It is assymetric in general, as suggested by the notation $f \to g$, and for certain choices of $g$ and $P$ can take the value $\infty$. The parameter $\epsilon$ is a tolerance which must be chosen ahead of time (see Remark~\ref{rem:heuristic} for some recommendations).

For practical purposes we are mostly interested in the following special case (in which case we suppress the $P$ in $\diff^P$):
\begin{itemize}
    \item $f$ is a probability distribution on a graph $G$ and $g = \tau$ is uniform,
    \item $P$ is the Metropolis-Hastings Markov chain targeting $\tau$ whose proposal chain is a random walk on $G$.
\end{itemize}
Note that $P$ is symmetric and bistochastic in this special case. We refer to this setup as \emph{Metropolis-Hastings diffusion distance to uniformity}. Note that when $G$ is connected, $P$ is ergodic as soon as $G$ is non-bipartite (e.g.~contains a triangle), and thus we assume throughout that this is the case.

\begin{remark}[Heuristics for choosing $p$ and $\epsilon$]\label{rem:heuristic}
Metropolis-Hastings diffusion distance to uniformity depends on the choice of $p$ and $\epsilon$. While there may be case-specific reasons for a particular choice, we give some general recommendations here. 

For statistical tests, the underlying graph is typically fixed, and the best theoretical results and approximations are available for $p = 2$ (Theorem~\ref{thm:hp-normal-permutation}). A reasonable way to pick $\epsilon$ for a graph with $N$ nodes is to note that $\mathbb{E}[||f - \tau||_2] \approx \frac{1}{\sqrt{N}}$ for large $N$ where $f$ is drawn uniformly from the simplex $\Delta_N$, and set $\epsilon$ to be about a tenth of this value. This ensures that even spatially independent data will typically have positive diffusion distance, creating a strong right-tailed test. 

For measuring clustering directly, the graph size may vary, and $p = 1$ has better scale invariance. In particular, note that a distribution $f$ which is zero on a fraction $x \in [0,1]$ of the nodes of a graph and uniform on the rest satisfies $||f-\tau||_1 = 2x$. We can thus visualize $\epsilon$ as the $\ell^1$ deviation from uniform achieved when the fraction $\epsilon / 2$ of the graph is uniformly covered, regardless of the size of the graph, which can help us calibrate $\epsilon$.

\end{remark}


\subsection{Theoretical bounds}
\label{subsec:basic-properties}

We now record some theoretical properties of diffusion distance. Each is stated in a convenient level of generality, but note that all results in this section apply to Metropolis-Hasting diffusion distance to uniformity. 

\begin{theorem}[Spectral upper bound for ergodic chains targeting uniformity]
\label{thm:diff-upper-bound-ergodic}
Let $P$ be the transition matrix of an irreducible, aperiodic Markov chain on $N$ states with stationary distribution $\tau$, let $1 \le p < \infty$, and assume $P$ is diagonalizable over $\mathbb{C}$. Let $1 = \lambda_1, \lambda_2, \dots, \lambda_N$ be the eigenvalues of $P$, with right eigenvectors $\{u_i\}_{i=1}^N$ and left eigenvectors $\{v_i^\top\}_{i=1}^N$ satisfying
\[
  Pu_i = \lambda_i u_i, \qquad
  v_i^\top P = \lambda_i v_i^\top, \qquad
  u_1 = \mathbf{1}, \qquad
  v_1^\top = \tau,
\]
and the biorthogonality relations $v_i^\top u_j = \delta_{ij}$, where
$\delta_{ij} = 1$ if $i = j$ and $0$ otherwise. For $i \ge 2$, normalize
so that $\|v_i^\top\|_p = 1$, and set $
  \lambda := \max_{2 \le i \le N}|\lambda_i| < 1$.
Write any initial distribution $f$ with $f\mathbf{1} = 1$ as
$f = \sum_{i=1}^N c_i v_i^\top$ with $c_i = fu_i$. If $f = \tau$ then
$\diff^P_{p,\epsilon}(f \to \tau) = 0$; otherwise,
\[
  \diff^P_{p,\epsilon}(f \to \tau)
  \;\le\;
  \left\lceil
    \frac{\log\!\bigl(\sum_{i=2}^N |c_i|\,/\,\epsilon\bigr)}{\log(1/\lambda)}
  \right\rceil.
\]
\end{theorem}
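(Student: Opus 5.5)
The approach is to expand $fP^n - \tau$ in the left eigenbasis and bound its $\ell_p$ norm by the triangle inequality. First, since $v_i^\top u_j = \delta_{ij}$ and the $v_i^\top$ form a basis (by diagonalizability), any $f$ can be written as $f=\sum_i c_i v_i^\top$. Pairing with $u_j$ gives $c_j = fu_j$. In particular, $c_1 = f\mathbf{1} = 1$, so the $i=1$ term is exactly $\tau$.

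Applying $P^n$ on the right and using $v_i^\top P = \lambda_i v_i^\top$ gives
\[
fP^n - \tau = \sum_{i=2}^N c_i \lambda_i^n v_i^\top .
\]
By the triangle inequality and the normalization $\|v_i^\top\|_p=1$,
\[
\|fP^n-\tau\|_p \le \sum_{i=2}^N |c_i|\,|\lambda_i|^n \le \lambda^n \sum_{i=2}^N |c_i| =: \lambda^n S.
\]
Here $\lambda<1$ because irreducibility and aperiodicity (Perron--Frobenius) make $1$ a simple eigenvalue and the unique eigenvalue of modulus one.

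Next, handle the edge cases. If $f=\tau$, then $\|fP^0-\tau\|_p=0<\epsilon$, so the distance is $0$. If $f\neq\tau$, then $S>0$, because the $v_i^\top$ for $i \geq 2$ are linearly independent and $f-\tau=\sum_{i\ge2}c_iv_i^\top\ne0$, so the logarithm is defined. If $\lambda=0$, then $fP^n=\tau$ for every $n\ge1$; one reads $\log(1/\lambda)=\infty$ and the bound as $\lceil 0\rceil$ or $1$, and I will state this convention in a remark. If $S<\epsilon$, the right-hand side is $\le 0$ while $n=0$ already works, since $\|f-\tau\|_p\le S<\epsilon$. The natural reading is that the ceiling of a negative number is at most $0$, consistent with $\mathbb N_0$.

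Finally, choose $n=\lceil \log(S/\epsilon)/\log(1/\lambda)\rceil$. Then $n\log(1/\lambda)\ge\log(S/\epsilon)$, i.e.\ $\lambda^n S\le\epsilon$.

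The main obstacle is that Definition~\ref{def:diff-distance} uses a strict inequality $<\epsilon$, but the ceiling only yields $\le\epsilon$. I would resolve this by noting when equality is impossible, and otherwise checking whether the strict inequality survives.
\begin{itemize}
\item Equality $\lambda^nS=\epsilon$ is possible only when $\log(S/\epsilon)/\log(1/\lambda)$ is an integer.
\item Even then, $\|fP^n-\tau\|_p=\lambda^nS$ requires equality in both $|\lambda_i|^n\le\lambda^n$ for every $i$ with $c_i\neq0$ and in the triangle inequality.
\end{itemize}
If this cannot be excluded in general, I would note that in the boundary case one may need $n+1$ steps. The clean statement then follows either by treating the bound up to this degenerate tie or by replacing $\lceil x\rceil$ with $\lfloor x\rfloor+1$, which dominates it. The honest proof records this caveat.
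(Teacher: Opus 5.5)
Your argument is the paper's argument: expand $f-\tau=\sum_{i\ge2}c_iv_i^\top$ using $c_1=f\mathbf{1}=1$, push it through $P^n$, apply the triangle inequality to get $\|fP^n-\tau\|_p\le\lambda^nS$, and use Perron--Frobenius for $\lambda<1$. The only divergence is the final step, and there your caution is justified. The paper's proof says the smallest integer $n$ with $n>x$, where $x:=\log(S/\epsilon)/\log(1/\lambda)$, is $\lceil x\rceil$. That is false when $x$ is an integer.

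You should drop the conditional ``if this cannot be excluded'': the tie is attainable, and the stated bound then fails. Your own equality conditions show how: take $f-\tau$ to be a multiple of a single eigenvector with $|\lambda_i|=\lambda$. Concretely, let $N=2$, $P=\begin{pmatrix}3/4&1/4\\1/4&3/4\end{pmatrix}$, $p=1$, $f=(1,0)$. Then $\lambda=1/2$, $v_2^\top=\tfrac12(1,-1)$, $u_2=(1,-1)^\top$, $S=|c_2|=1$, and $\|fP^n-\tau\|_1=2^{-n}$ exactly. For $\epsilon=1/4$ this gives $\diff^P_{1,1/4}(f\to\tau)=3$, whereas the right-hand side is $\lceil\log 4/\log 2\rceil=2$.

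Two smaller points need the same correction.
\begin{itemize}
\item In the case $S<\epsilon$, the fact that the ceiling of a negative number is at most $0$ breaks an upper bound rather than rescuing it. In the example above with $\epsilon=4$, the right-hand side is $\lceil -2\rceil=-2$ while the diffusion distance is $0$. So you need an explicit clamp at $0$, not a ``natural reading''.
\item For $\lambda=0$, diagonalizability forces $P=\mathbf{1}\tau$. Only your second reading (bound $1$) is valid, since the bound $\lceil 0\rceil=0$ fails whenever $\|f-\tau\|_p\ge\epsilon$.
\end{itemize}
What your argument actually proves, and what you should state, is
\[
  \diff^P_{p,\epsilon}(f\to\tau)\;\le\;\max\bigl\{0,\ \lfloor x\rfloor+1\bigr\},
\]
with $x:=0$ when $\lambda=0$. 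This holds because $\lfloor x\rfloor+1$ is the least integer strictly exceeding $x$, and $n=0$ already suffices when $x<0$. It coincides with the stated $\lceil x\rceil$ except when $x$ is a nonnegative integer (off by one) or $x\le-1$ (negative right-hand side).
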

\begin{proof}
\label{proof:thm:diff-ergodic}
Since $P$ is irreducible and aperiodic, it is primitive: some power of
$P$ has all strictly positive entries~\cite{meyer2000matrix}. The
Perron--Frobenius theorem then guarantees that the eigenvalue $1$ is
simple and that $|\lambda_i| < 1$ for every $i \ge 2$~\cite{meyer2000matrix};
in particular $\lambda < 1$.

Because $\{v_i^\top\}_{i=1}^N$ is a basis of left eigenvectors,
every row vector $f$ with $f\mathbf{1} = 1$ has a unique expansion
$f = \sum_{i=1}^N c_i v_i^\top$. Multiplying on the right by $u_j$
and applying biorthogonality gives $c_j = fu_j$ for each $j$.
In particular $c_1 = f\mathbf{1} = 1$, so since $v_1^\top = \tau$,
\[
  f - \tau = \sum_{i=2}^N c_i v_i^\top.
\]
Using $v_i^\top P^n = \lambda_i^n v_i^\top$ (which follows by induction
from the left eigenvector relation $v_i^\top P = \lambda_i v_i^\top$)
together with $\tau P^n = \tau$,
\[
  fP^n - \tau = (f - \tau)P^n = \sum_{i=2}^N c_i \lambda_i^n v_i^\top.
\]
By the triangle inequality and $\|v_i^\top\|_p = 1$,
\[
  \|fP^n - \tau\|_p
  \;\le\; \sum_{i=2}^N |c_i|\,|\lambda_i|^n
  \;\le\; \lambda^n \sum_{i=2}^N |c_i|,
\]
where the last step uses $|\lambda_i| \le \lambda$ for all $i \ge 2$.

If $f = \tau$, then $c_i = \tau u_i = v_1^\top u_i = \delta_{1i}$
for all $i$ by biorthogonality, so $\sum_{i=2}^N |c_i| = 0$ and
$fP^n = \tau$ for all $n \ge 0$, giving
$\diff^P_{p,\epsilon}(f \to \tau) = 0$. Otherwise, set
$S := \sum_{i=2}^N |c_i| > 0$. The bound $\|fP^n - \tau\|_p \le
\lambda^n S$ falls below $\epsilon$ as soon as $\lambda^n S < \epsilon$,
which is equivalent to
\[
  n > \frac{\log(S/\epsilon)}{\log(1/\lambda)}.
\]
The smallest such integer $n$ is $\lceil \log(S/\epsilon)/\log(1/\lambda)
\rceil$, which gives the stated upper bound on $\diff^P_{p,\epsilon}(f \to
\tau)$.
\end{proof}

\begin{proposition}[Lower bound in terms of Wasserstein distance]
\label{prop:wasserstein-bound}
Let $G = (V, E)$ be a graph with shortest-path metric $d$, diameter
$D$, and unit edge lengths. Let $P$ be a Markov transition matrix with
$P_{ij} > 0$ only if $(i, j) \in E$, with stationary distribution $g$. Then for a probability measure $f$
on $V$,
\[
  W_1(f, g) - D\epsilon \le  \diff^P_{1,\epsilon}(f \to g).
\]
\end{proposition}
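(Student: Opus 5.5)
Let $n = \diff^P_{1,\epsilon}(f \to g)$. If $n = \infty$ there is nothing to prove, so assume $n$ is finite. The plan is to bound $W_1(f,g)$ by the triangle inequality for $W_1$, splitting it at the intermediate distribution $fP^n$:
\[
  W_1(f,g) \le W_1(f, fP^n) + W_1(fP^n, g).
\]
The first term measures how far mass travels in $n$ steps of a graph-constrained chain. The second measures the residual discrepancy, which the definition of diffusion distance makes small in $\ell_1$.

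For the first term, I will show $W_1(\mu, \mu P) \le 1$ for any probability measure $\mu$ on $V$. Use the coupling $\pi_{ij} = \mu_i P_{ij}$. Its first marginal is $\mu$ because $P$ is row-stochastic, and its second marginal is $\mu P$. It is supported on pairs with $i=j$ or $(i,j)\in E$, since $P_{ij}>0$ only along edges; diagonal entries $P_{ii}$ are allowed and contribute distance $0$. Hence every pair in the support has $d(i,j)\le 1$, and the transport cost is at most $1$. Applying this to $\mu = fP^k$ for $k = 0,\dots,n-1$ and using the triangle inequality gives
\[
  W_1(f, fP^n) \le \sum_{k=0}^{n-1} W_1(fP^k, fP^{k+1}) \le n.
\]
Alternatively, one can take the coupling $\pi_{ij} = f_i (P^n)_{ij}$ directly, since $(P^n)_{ij} > 0$ forces $d(i,j) \le n$.

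For the second term, apply the inequality $W_1(\mu,\nu)\le 2D\|\mu-\nu\|_{\mathrm{TV}}$ recorded in the preliminaries. Since $\|\mu-\nu\|_{\mathrm{TV}} = \tfrac12\|\mu-\nu\|_1$, this reads $W_1(\mu,\nu) \le D\|\mu-\nu\|_1$. (It also follows directly: keep the common mass $\min(\mu_i,\nu_i)$ in place and move the remaining $\tfrac12\|\mu-\nu\|_1$ over distance at most $D$, which gives the sharper bound $\tfrac{D}{2}\|\mu - \nu\|_1$.) By the definition of $n$, $\|fP^n - g\|_1 < \epsilon$, so $W_1(fP^n, g) \le D\epsilon$.

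Combining the two bounds gives $W_1(f,g) \le n + D\epsilon$, which rearranges to the claimed inequality. There is no serious obstacle. The points needing care are:
\begin{itemize}
  \item the coupling support argument, which must allow $i=j$ (self-loops give distance $0$);
  \item the normalization between the TV and $\ell_1$ forms of the diameter inequality, which only weakens the bound harmlessly;
  \item the case of a disconnected graph, where $D=\infty$ and the statement is vacuous or should be read with $G$ connected.
\end{itemize}
Stationarity of $g$ is not actually needed in the argument.
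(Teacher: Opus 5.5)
Your proposal is correct and follows essentially the same route as the paper. Both split $W_1(f,g)$ by the triangle inequality at $fP^n$, bound $W_1(f,fP^n)\le n$ via the coupling $\mathrm{diag}(f)P^n$ (your step-by-step chaining is an equivalent variant), and bound the residual by the diameter inequality $W_1\le D\|\cdot\|_1<D\epsilon$. Your side remarks are all correct and slightly tidier than the paper's version: the $n=\infty$ case, allowing holding probabilities $P_{ii}>0$, the fact that stationarity of $g$ is never used, and the sharper constant $\tfrac{D}{2}\|\cdot\|_1$.
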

\begin{proof}
Let
\[
  n := \diff^P_{1,\epsilon}(f \to g).
\]
By definition of diffusion distance,
\[
  \|fP^n - g\|_1 < \epsilon.
\]
Using~\eqref{eq:wass-tv-diameter},
\[
  W_1(fP^n, g)
  \le 2D\,\|fP^n - g\|_{\mathrm{TV}}
  = D\,\|fP^n - g\|_1
  < D\epsilon.
\]
By the triangle inequality,
\[
  W_1(f,g)
  \le W_1(f,fP^n) + W_1(fP^n,g).
\]
It therefore remains to bound \(W_1(f,fP^n)\).

Consider the matrix
\[
  \Gamma := \operatorname{diag}(f)\,P^n.
\]
Its \((i,j)\) entry is
\[
  \Gamma_{ij} = f_i (P^n)_{ij}.
\]
Since \(P^n\) is row-stochastic, the row sums of \(\Gamma\) satisfy
\[
  \sum_j \Gamma_{ij}
  = f_i \sum_j (P^n)_{ij}
  = f_i,
\]
and the column sums satisfy
\[
  \sum_i \Gamma_{ij}
  = \sum_i f_i (P^n)_{ij}
  = (fP^n)_j.
\]
Thus \(\Gamma\) is a coupling of \(f\) and \(fP^n\), and therefore
\[
  W_1(f,fP^n)
  \le \sum_{i,j} d(i,j)\,\Gamma_{ij}.
\]

Now, if \((P^n)_{ij} > 0\), then there exists a path from \(i\) to \(j\)
of length at most \(n\) using edges along which \(P\) assigns positive
transition probability. Since \(P_{ij} > 0\) only if \((i,j)\in E\) and
each edge has unit length, it follows that \(d(i,j) \le n\) whenever
\(\Gamma_{ij} > 0\). Hence
\[
  W_1(f,fP^n)
  \le \sum_{i,j} n\,\Gamma_{ij}
  = n \sum_{i,j} \Gamma_{ij}
  = n.
\]
Combining the bounds gives
\[
  W_1(f,g)
  < n + D\epsilon.
\]
Equivalently,
\[
  W_1(f,g) - D\epsilon < n
  = \diff^P_{1,\epsilon}(f \to g).
\]
Since \(\diff^P_{1,\epsilon}(f \to g)\) is an integer, this implies
\[
  W_1(f,g) - D\epsilon \le \diff^P_{1,\epsilon}(f \to g),
\]
as claimed.
\end{proof}

We close with a stability result controlling the effect of perturbations of the source distribution. In particular, we prove that the diffusion distance from a function $g$ which is close to $f$ is bounded above and below by the diffusion distance of $g$ with shifted tolerances.

\begin{proposition}[Stability from source distribution to uniform]
\label{prop:stability-initial}
Let $P$ be a bistochastic Markov transition matrix on $N$ states with uniform stationary distribution $\tau$, and let $1 \le p < \infty$. Let $f, g \in \Delta_N$ be probability distributions such that $||f-g||_p <\delta$. Then for any $\epsilon > \delta$,
\[
  \diff^P_{p,\epsilon + \delta}(f \to \tau) \le \diff^P_{p,\epsilon}(g \to \tau) \leq \diff^P_{p,\epsilon - \delta}(f \to \tau)
\]
\end{proposition}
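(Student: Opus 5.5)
The argument rests on one estimate: the $\ell_p$ distance between $fP^n$ and $gP^n$ never exceeds $\|f-g\|_p$. Granting that, both inequalities of Proposition~\ref{prop:stability-initial} follow from the triangle inequality and the minimality built into Definition~\ref{def:diff-distance}.

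\textbf{Step 1 (non-expansiveness).} We show $\|(f-g)P^n\|_p \le \|f-g\|_p < \delta$ for every $n \ge 0$. By induction it suffices to show $\|xP\|_p \le \|x\|_p$ for any row vector $x$. We use that $P$ is bistochastic. Write $(xP)_j = \sum_i x_i P_{ij}$, and note that the column $(P_{ij})_i$ is a probability vector because $\mathbf{1}P = \mathbf{1}$. Convexity of $t\mapsto |t|^p$ (Jensen) then gives
\[
|(xP)_j|^p \le \sum_i P_{ij}|x_i|^p .
\]
Summing over $j$ and using that the row sums of $P$ equal $1$ yields $\|xP\|_p^p \le \|x\|_p^p$. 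This is the only point where double stochasticity is needed. It is the one real step, and it is routine.

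\textbf{Step 2 (right inequality).} Let $m = \diff^P_{p,\epsilon-\delta}(f\to\tau)$; if $m=\infty$ there is nothing to prove. Then $\|fP^m-\tau\|_p < \epsilon-\delta$. By the triangle inequality and Step 1,
\[
\|gP^m-\tau\|_p \le \|gP^m - fP^m\|_p + \|fP^m-\tau\|_p < \delta + (\epsilon-\delta) = \epsilon .
\]
Since $\diff$ is defined as a minimum, $\diff^P_{p,\epsilon}(g\to\tau) \le m$. The hypothesis $\epsilon>\delta$ ensures the tolerance $\epsilon-\delta$ is positive, so this quantity is well defined.

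\textbf{Step 3 (left inequality).} This is symmetric. Let $k=\diff^P_{p,\epsilon}(g\to\tau)$; if $k=\infty$ the inequality is trivial. Then
\[
\|fP^k-\tau\|_p \le \|fP^k-gP^k\|_p + \|gP^k-\tau\|_p < \delta+\epsilon ,
\]
so $\diff^P_{p,\epsilon+\delta}(f\to\tau)\le k$. No step presents a real obstacle beyond verifying the contraction in Step 1.
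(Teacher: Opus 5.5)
Your proof is correct, and its structure matches the paper's. Both use the triangle inequality through $fP^n$ or $gP^n$ together with the minimality built into the definition of diffusion distance. In the paper the right inequality is obtained "by symmetry": swap $f$ and $g$ and replace $\epsilon$ by $\epsilon-\delta$. You write that case out directly and also handle the value $\infty$ explicitly.

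The one real difference is how you prove the contraction $\|xP\|_p\le\|x\|_p$.
\begin{itemize}
\item The paper decomposes $P$ via Birkhoff--von Neumann into a convex combination of permutation matrices, each of which is an $\ell_p$-isometry.
\item You apply Jensen's inequality to $t\mapsto|t|^p$, using the columns of $P$ as weights, and then sum over $j$ using the unit row sums.
\end{itemize}
Your route is more elementary and self-contained, and it shows exactly where each half of bistochasticity enters. The paper's route relies on a heavier theorem, but it gives non-expansiveness in every permutation-invariant norm at once, for example also $\ell_\infty$.
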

\begin{proof}
Only the left inequality requires proof; the right follows by symmetry, after replacing $\epsilon$ with $\epsilon - \delta$. Let
\[
  n := \diff^P_{p,\epsilon}(g \to \tau).
\]
By definition of diffusion distance,
\[
  \|gP^n - \tau\|_p < \epsilon.
\]
Using the triangle inequality,
\begin{align*}
  \|fP^n - \tau\|_p
  &= \|fP^n - gP^n + gP^n - \tau\|_p \\
  &\le \|(f-g)P^n\|_p + \|gP^n - \tau\|_p.
\end{align*}
Since $P$ is bistochastic, the Birkhoff--von Neumann theorem~\cite{Birkhoff1946} expresses $P$ as a convex combination of permutation matrices:
\[
  P = \sum_{k=1}^K \alpha_k \Pi_k,
  \qquad
  \alpha_k \ge 0,
  \qquad
  \sum_{k=1}^K \alpha_k = 1,
\]
where each $\Pi_k$ is a permutation matrix. For any $x \in \mathbb{R}^N$, each permutation preserves the $\ell_p$ norm, so
\[
  \|x\Pi_k\|_p = \|x\|_p.
\]
Hence
\begin{align*}
  \|xP\|_p
  = \left\|\sum_{k=1}^K \alpha_k x\Pi_k\right\|_p 
  \le \sum_{k=1}^K \alpha_k \|x\Pi_k\|_p 
  = \sum_{k=1}^K \alpha_k \|x\|_p 
  = \|x\|_p.
\end{align*}
Applying this inductively gives
\[
  \|xP^n\|_p \le \|x\|_p
  \qquad \text{for all } n \ge 0.
\]
Therefore,
\[
  \|(f-g)P^n\|_p \le \|f-g\|_p < \delta.
\]
Combining the bounds,
\[
  \|fP^n - \tau\|_p
  < \delta + \epsilon.
\]
By definition of diffusion distance, this implies
\[
  \diff^P_{p,\epsilon+\delta}(f \to \tau)
  \le n
  =
  \diff^P_{p,\epsilon}(g \to \tau),
\]
as claimed.
\end{proof}

\subsection{Connection to Moran's \texorpdfstring{$I$}{I}}
\label{sec:moranconnection}

To study one-step behavior, we use the \emph{squared deviation} from a reference distribution $g$,
\begin{equation}
\label{eq:sqd_definition}
  \sqd_g(f) := \frac{1}{N}\|f - g\|_2^2.
\end{equation}
The one-step squared deviation $\sqd_g(fP)$ provides the link between our framework to Moran-type spatial autocorrelation statistics.

\begin{theorem}[Moran's $I$ as normalized one-step squared deviation~\cite{DuchinMurphyWeighill}]
\label{thm:moran-sqd}
Let $P$ be a bistochastic Markov matrix on $N$ states with uniform stationary distribution $\tau$, let $f \in \Delta_N$ be a probability vector with $\sqd_\tau(f) > 0$, and set $M := PP^\top$. 
Then 
\[
\label{eq:moran-sqd-identity}
  I_\tau(f, M) = \frac{\sqd_\tau(fP)}{\sqd_\tau(f)}.
\]
\end{theorem}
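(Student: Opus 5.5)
The plan is to unwind the definition of Moran's $I$ in \eqref{eq:morans_i} with the observations $v_i = f_i$ and the weight matrix $W = M = PP^\top$, and to show that both the normalizing constant and the quadratic form collapse to quantities expressed through $P$ and $\tau$. I read $I_\tau(f,M)$ as exactly this: Moran's $I$ of the data vector $f$ with spatial weights $M$. In particular, the diagonal of $M$ is kept in the double sum, as \eqref{eq:morans_i} permits.

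\textbf{Step 1: the mean and the centered vector.} Since $f \in \Delta_N$, the mean is $\bar v = \frac1N\sum_i f_i = \frac1N$. Hence the centered vector is $x := f - \tau$, and the denominator of \eqref{eq:morans_i} is
\[
\sum_i (f_i - \bar v)^2 = \|f-\tau\|_2^2 = N\,\sqd_\tau(f),
\]
which is positive by hypothesis.

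\textbf{Step 2: the total weight.} Bistochasticity of $P$ gives $\mathbf{1}PP^\top\mathbf{1}^\top = (\mathbf{1}P)(\mathbf{1}P)^\top = \mathbf{1}\mathbf{1}^\top = N$. So $\sum_{i,j} w_{ij} = N$, and the prefactor $N/\sum w_{ij}$ equals $1$. Nonnegativity of $M$ is automatic, so $M$ is a legitimate weight matrix.

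\textbf{Step 3: the numerator.} The numerator is the quadratic form
\[
xMx^\top = xPP^\top x^\top = \|xP\|_2^2.
\]
Because $\tau$ is stationary, $\tau P = \tau$, so $xP = fP - \tau$. Thus the numerator equals $\|fP-\tau\|_2^2 = N\,\sqd_\tau(fP)$.

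\textbf{Step 4: conclusion.} Dividing the numerator by the denominator, the factors of $N$ cancel and we obtain $I_\tau(f,M) = \sqd_\tau(fP)/\sqd_\tau(f)$.

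There is no real obstacle here; the proof is a direct computation. The only points needing care are bookkeeping ones. First, one must match the row-vector convention (distributions act as $fP$) with the quadratic form $\sum_{ij} w_{ij}x_ix_j = xWx^\top$. Second, one must confirm that both marginal conditions of bistochasticity are used: the column-sum condition $\mathbf{1}P = \mathbf{1}$ gives the total weight $N$ and makes $\tau$ stationary, while row-stochasticity makes $P$ a Markov matrix in the first place.
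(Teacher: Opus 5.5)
Your proof is correct and is the same direct computation as the paper's. Both arguments show that the total weight of $M=PP^\top$ is $N$, so the prefactor is $1$, and that the quadratic form $(f-\tau)M(f-\tau)^\top$ equals $\|fP-\tau\|_2^2$ because $\tau P=\tau$. The only cosmetic differences are that you compute the total weight as $(\mathbf{1}P)(\mathbf{1}P)^\top$ rather than via $M\mathbf{1}^\top=\mathbf{1}^\top$, and you spell out $\bar v = 1/N$, which the paper leaves implicit.
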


\begin{proof}
Since $P$ is bistochastic, $M\mathbf{1} = PP^\top\mathbf{1} = P\mathbf{1} = \mathbf{1}$, so every row of $M$ sums to $1$ and
\[
  S_0 = \sum_{i,j=1}^N M_{ij} = N.
\]
Substituting $S_0 = N$ into the definition of $I_\tau(f,M)$,
\[
  I_\tau(f, M)
  = \frac{(f-\tau)^\top M(f-\tau)}{\|f-\tau\|_2^2}.
\]
Dividing the definitions of $\sqd_\tau(fP)$ and $\sqd_\tau(f)$,
\[
  \frac{\sqd_\tau(fP)}{\sqd_\tau(f)}
  = \frac{\tfrac{1}{N}(f-\tau)^\top M(f-\tau)}
         {\tfrac{1}{N}\|f-\tau\|_2^2}
  = \frac{(f-\tau)^\top M(f-\tau)}{\|f-\tau\|_2^2}
  = I_\tau(f, M).
\]
\end{proof}

Diffusion distance therefore expands Moran's $I$ in two ways. Firstly, the usual spatial weights are replaced by the Markov chain-inspired weights $PP^\top$ with $P$ a transition matrix, as first proposed in \cite{DuchinMurphyWeighill}. And secondly, multi-step convergence is measured instead of just one-step convergence. This hints at a limitation of Moran's $I$: as a one-step statistic, it cannot detect structural differences that manifest only across multiple steps. Two systems may exhibit identical one-step contraction yet very different long-run convergence behavior, which diffusion distance is designed to capture. Figure~\ref{fig:3examples} shows an illustrative example with grid (sub)graphs.

\begin{figure}
    \centering
    \begin{subfigure}{0.32\textwidth}
        \centering
        \includegraphics[width=\textwidth]{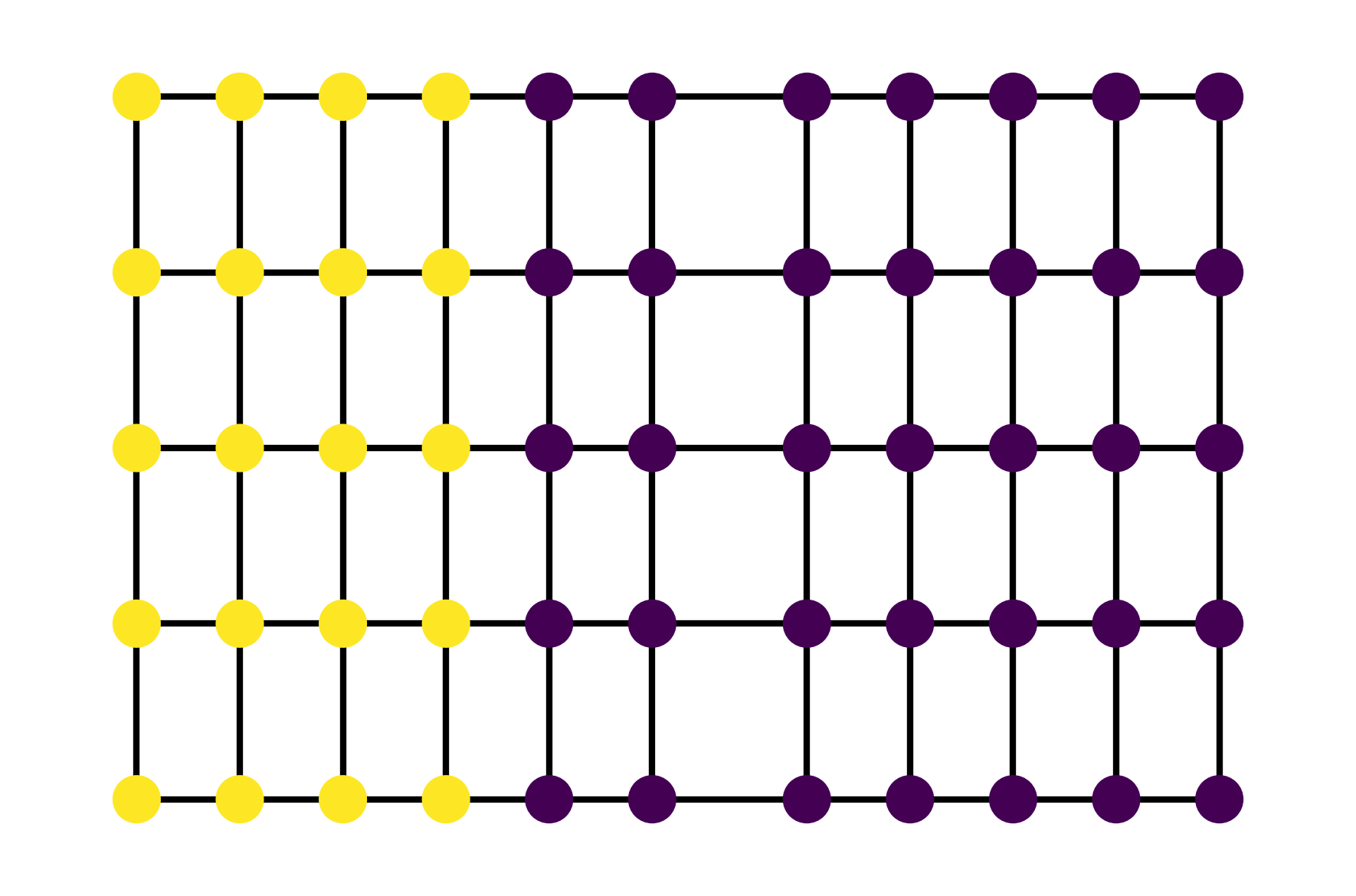}
        \caption{$I = 0.89, \diff_{0.01} = 118$}
    \end{subfigure}
    \begin{subfigure}{0.32\textwidth}
        \centering
        \includegraphics[width=\textwidth]{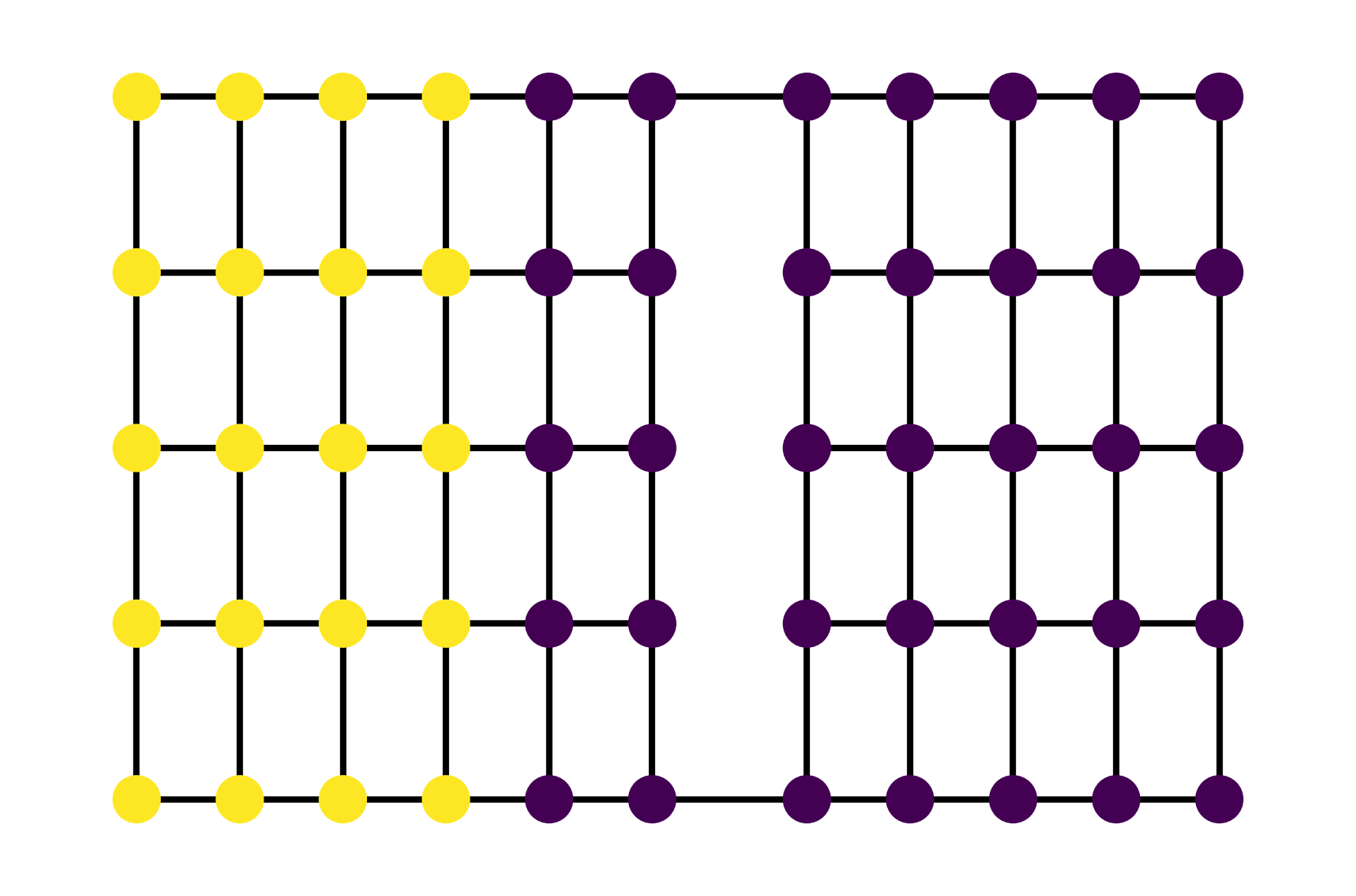}
        \caption{$I = 0.89, \diff_{0.01} = 164$}
    \end{subfigure}
    \begin{subfigure}{0.32\textwidth}
        \centering
\includegraphics[width=\textwidth]{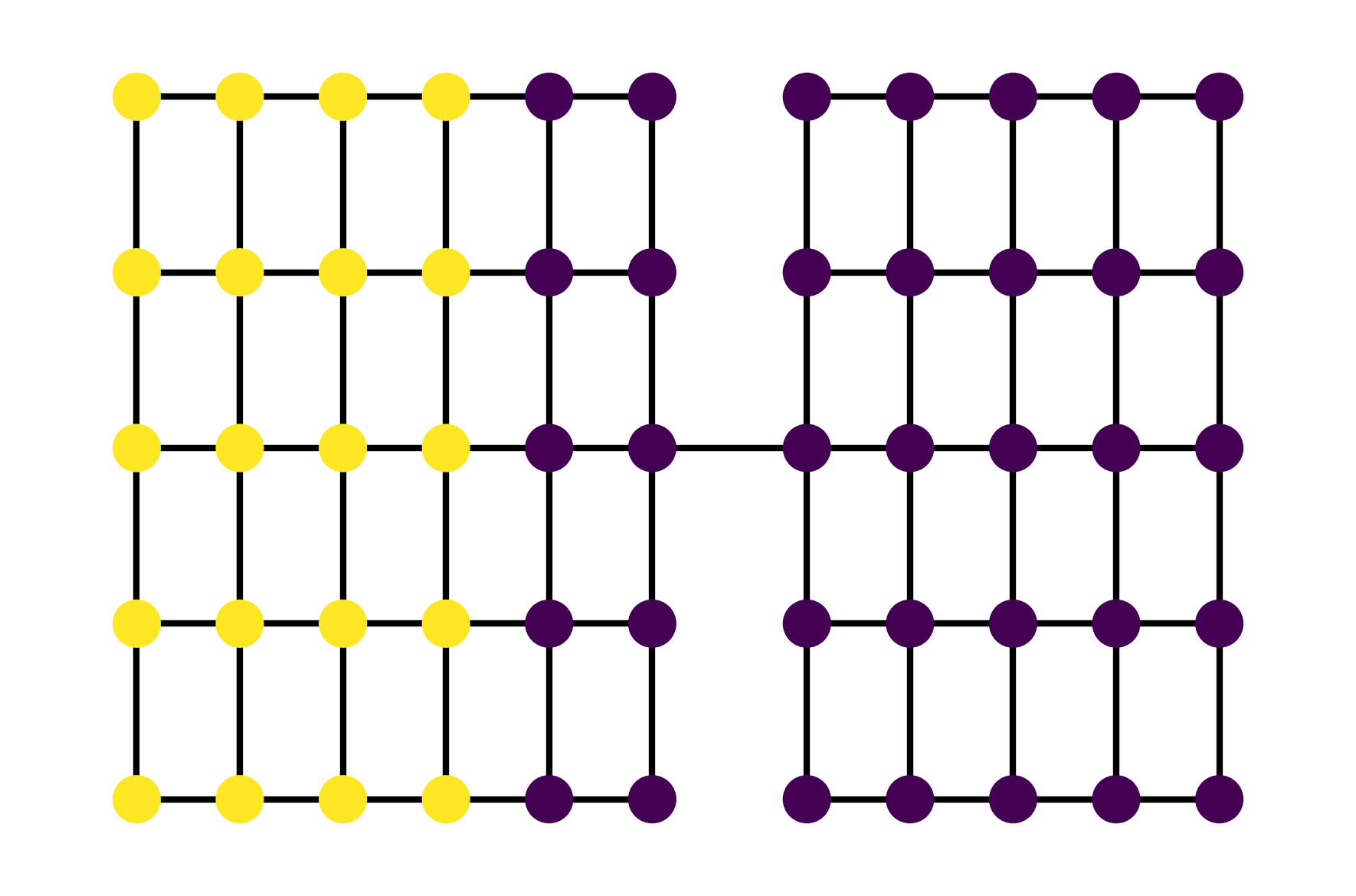}
\caption{$I = 0.89, \diff_{0.01} = 301$}
    \end{subfigure}
    \caption{Three graphs with uniform distribtion $f$ supported on the yellow nodes. The graphs have equal diameters and equal Moran's $I$ for $f$, but different diffusion distances to uniform with $p = 2$, $\epsilon = 0.01$ due to the change in geometry.}
    \label{fig:3examples}
\end{figure}

\section{Inference under permutation null models}
\label{sec:statistics}

In addition to quantifying the extent of spatial clustering, Metropolis-Hastings diffusion distance to uniformity can be used as a statistical test for the presence of spatial clustering. To enable this, we develop inference under permutation null models. Given a probability distribution $f \in \Delta_N$, the permutation null model is the uniform distribution on all permutations of the values of $f$. If $\pi$ is a permutation of $\{1,\ldots, N\}$, then we denote by $f_\pi$ the vector $f$ with values rearranged by $\pi$. Permuting spatial labels disrupts location dependence while preserving the empirical mass values, and thus provides a natural baseline for assessing whether an observed spatial pattern is unusually clustered relative to random relabelings on the same graph. It is also a common null model used in applications of Moran's $I$~\cite{zhang2007decomposition}, since it does not assume any particular form of the distribution at each location. We analyze one-step and $n$-step squared deviations, for which null moments can be computed exactly.

\subsection{Squared deviation under the null}
\label{subsec:sqd-null}

Let $\pi$ be a uniformly random permutation of $\{1,\dots,N\}$. The
following theorem collects the first and second moments of the permutation
null distribution of $\sqd(f_\pi P)$. 

\begin{theorem}[Permutation moments of squared deviation, uniform stationary distribution]
\label{thm:one-step-moments-uniform}
Assume $N \ge 4$. Let $P$ be an $N \times N$ transition matrix with uniform stationary distribution $\tau = \frac{1}{N}\mathbf{1}$, and set $M := PP^\top$. Let $\pi$ be uniformly distributed over all permutations of $\{1,\dots,N\}$, let $f$ be a probability vector, and define
\[
  Z_a := f_a - \tfrac{1}{N}, \qquad
  \sqd(f) := \tfrac{1}{N}\sum_{a=1}^N Z_a^2, \qquad
  S_4(Z) := \sum_{a=1}^N Z_a^4, \qquad
  \Delta := \sum_{i=1}^N M_{ii}^2.
\]
Writing $\mathcal{D} := (N-1)(N-2)(N-3)$, the first moment is
\[
  \mathbb{E}_\pi\bigl[\sqd(f_\pi P)\bigr]
  = \sqd(f)\,\frac{\tr(M) - 1}{N - 1},
\]
and the second moment is
\begin{align*}
  \mathbb{E}_\pi\bigl[\sqd(f_\pi P)^2\bigr]
  &= \frac{\sqd(f)^2}{N\mathcal{D}}
  \Bigl[
    (N^2-3N+3)\bigl(\tr(M)^2 + 2\tr(M^2)\bigr) - N^2 \\
  &\hspace{2.7cm}
    - 2(N^2-6N+6)\tr(M) - 3N(N-1)\Delta
  \Bigr] \\
  &\quad
  + \frac{S_4(Z)}{N^2\mathcal{D}}
  \Bigl[
    -(N-1)\bigl(\tr(M)^2 + 2\tr(M^2)\bigr) + 2N \\
  &\hspace{2.7cm}
    - 4\tr(M) + N(N+1)\Delta
  \Bigr],
\end{align*}
and consequently
\[
\Var_\pi\bigl(\sqd(f_\pi P)\bigr) = \mathbb{E}_\pi\bigl[\sqd(f_\pi P)^2\bigr] - \Bigl(\mathbb{E}_\pi\bigl[\sqd(f_\pi P)\bigr]\Bigr)^2.
\]
\end{theorem}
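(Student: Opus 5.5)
The idea is to write $\sqd(f_\pi P)$ as a quadratic form in the permuted centered vector and then take moments over $\pi$. Since $P$ has uniform stationary distribution, $\tau P = \tau$, so $f_\pi P - \tau = (f_\pi - \tau)P = Z_\pi P$, where $Z_\pi$ is the permuted vector of deviations $Z_a$. Hence
\[
  \sqd(f_\pi P) = \tfrac{1}{N} Z_\pi M Z_\pi^\top = \tfrac{1}{N}\sum_{i,j} M_{ij} Z_{\pi(i)} Z_{\pi(j)} .
\]
We will only use two facts about $Z$: $\sum_a Z_a = 0$, and power sums $S_2 = N\sqd(f)$ and $S_4$. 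For $M$ we need $M\mathbf{1}^\top = \mathbf{1}^\top$; this holds because $P$ is row-stochastic and $\mathbf{1}P = \mathbf{1}$. Thus every row and column of $M$ sums to $1$, so $\sum_{i,j} M_{ij} = N$. We also need $\tr(M)$, $\tr(M^2) = \sum_{i,j} M_{ij}^2$ (using symmetry of $M$), and $\Delta$.

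\emph{First moment.} Under a uniform permutation, $\mathbb{E}[Z_{\pi(i)}^2] = S_2/N$. For $i \neq j$,
\[
  \mathbb{E}[Z_{\pi(i)}Z_{\pi(j)}] = \frac{(\sum_a Z_a)^2 - S_2}{N(N-1)} = -\frac{S_2}{N(N-1)}.
\]
Summing with weights $M_{ij}$, and using that the off-diagonal sum is $N - \tr(M)$, gives
\[
  \frac{1}{N}\left[\frac{S_2}{N}\tr M - \frac{S_2(N-\tr M)}{N(N-1)}\right] = \sqd(f)\,\frac{\tr M - 1}{N-1}.
\]

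\emph{Second moment.} We expand
\[
  \mathbb{E}\bigl[(Z_\pi M Z_\pi^\top)^2\bigr] = \sum_{i,j,k,l} M_{ij}M_{kl}\,\mathbb{E}\bigl[Z_{\pi(i)}Z_{\pi(j)}Z_{\pi(k)}Z_{\pi(l)}\bigr].
\]
The expectation depends only on the set partition of $(i,j,k,l)$ induced by equalities among indices. For each of the 15 set partitions we compute the mean of $Z_{a}Z_{b}Z_{c}Z_{d}$ over distinct labels by Möbius inversion from the power sums, using $S_1 = 0$ and $S_3$. Cross terms involving $S_3$ should carry a factor $S_1$ and vanish. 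Examples:
\begin{itemize}
\item all four labels distinct: the mean is $(3S_2^2 - 6S_4)/(N(N-1)(N-2)(N-3))$;
\item pattern $\{aa,bb\}$: the mean is $(S_2^2 - S_4)/(N(N-1))$;
\item pattern $\{aaa,b\}$: the mean is $-S_4/(N(N-1))$;
\item pattern $\{aa,b,c\}$: the mean is $(-S_2^2 + 2S_4)/(N(N-1)(N-2))$.
\end{itemize}
Next, for each partition we compute the corresponding restricted sum of $M_{ij}M_{kl}$. We do this by inclusion–exclusion from unrestricted sums that are expressible in $N$, $\tr M$, $\tr M^2$ and $\Delta$. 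The unrestricted sums include $\sum M_{ij}M_{kl} = N^2$, $\sum_{i,k} M_{ii}M_{kk} = (\tr M)^2$, $\sum_{i,j} M_{ij}^2 = \tr M^2$, $\sum_{i,j,l} M_{ij}M_{il} = N$, $\sum_{i,l} M_{ii}M_{il} = \tr M$, and $\sum_i M_{ii}^2 = \Delta$. Row sums equal to $1$ collapse all the other sums.

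Combining gives a linear combination of $S_2^2$ and $S_4$ over the common denominator $N\mathcal{D}$. Substituting $S_2 = N\sqd(f)$ and dividing by $N^2$ yields the two bracketed expressions; the variance line is then just the definition. The main obstacle is the bookkeeping in this second-moment step: keeping the 15 index patterns straight, with their multiplicities under the symmetry $(i,j)\leftrightarrow(k,l)$ and within each pair. I would organize this by grouping patterns by which of the coincidences $i=j$, $k=l$, and cross-coincidences hold. I would then check the final formula against sanity cases: $M = I$, where $\sqd$ is permutation invariant so the variance must vanish; and $M = \frac{1}{N}J$, where $\sqd \equiv 0$. These checks should pin down any arithmetic slips in the coefficients. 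The assumption $N \ge 4$ is needed exactly so that the four-distinct-label pattern is defined.
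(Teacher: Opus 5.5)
Your proposal is correct and follows essentially the same route as the paper: you write $\sqd(f_\pi P)=\frac1N Z_\pi M Z_\pi^\top$, compute the permutation moments of $Z$ from the power sums with $S_1=0$, and split $[N]^4$ by coincidence pattern, reducing the $M$-sums to $N$, $\tr M$, $\tr M^2$, $\Delta$ via unit row sums. The paper merges your 15 set partitions into five multiplicity classes $\Gamma_4,\dots,\Gamma_{1111}$ and obtains $\mathcal{T}_{1111}$ by complement rather than by M\"obius inversion. One caveat: the checks $M=I$ and $M=\frac1N J$ both have constant diagonal, so they cannot detect errors in combinations such as $\tr(M)^2-N\Delta$ or $\tr(M^2)-\tr M$. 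The coefficients over the common denominator $N\mathcal{D}$ therefore still have to be collected explicitly, as the paper does, rather than certified by these sanity cases.
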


Under the permutation null, we see that the expected one-step squared deviation factors into the initial deviation $\sqd(f)$ and a graph-dependent term controlled by the diagonal of $M = PP^\top$, which encodes the self-similarity of the chain's one-step transitions. The second-moment formula involves the graph-structural quantities $\tr(M)$, $\tr(M^2)$, and $\Delta$. The full proof of Theorem~\ref{thm:one-step-moments-uniform} is outlined in
Appendix~\ref{proof:thm:one-step-moments-uniform}.

\begin{observation}\label{obs:symmetricP}
    Note that when $P$ is symmetric, we can express the key quantities in Theorem~\ref{thm:one-step-moments-uniform} in terms of the eigenvalues $\lambda_i$ and orthonormal eigenvectors $v_i$ of $P$. The following two require only eigenvalues
\begin{align*}
    \tr(M) = \tr(P^2) = \sum_{i} \lambda^2_i, \quad 
    \tr(M^2)= \tr(P^4) = \sum_{i} \lambda_i^4 
    \end{align*}
While the third also requires eigenvectors:
\begin{align*}
\Delta = \sum_{i} \left( P^2_{ii} \right)^2 = \sum_i \left(  \sum_{k}  \lambda_k^2 v_{ik}^2  \right)^2 = \sum_k \sum_\ell \lambda_k^2 \lambda_\ell^2 \sum_i v_{ik}^2 v_{i\ell}^2
\end{align*}
Note that since the $v_k$ are unit vectors, we have $\sum_i v_{ik}^2 v_{i\ell}^2 \leq \sum_i v_{ik}^2 = 1$ for all $k,\ell$. The eigenvector dependence disappears when the diagonal of $M$ is constant: if $M_{ii} = c$ for all $i$, then $\tr(M) = Nc$ and hence $\Delta = \sum_i M_{ii}^2 = Nc^2 = \tr(M)^2/N$. This holds whenever the graph is $D$-regular, where $P_{ij} = 1/D$ on each edge and so $M_{ii} = 1/D$ for every $i$. In that case, all three quantities depend on the eigenvalues of $P$ alone.
\end{observation}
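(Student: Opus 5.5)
The plan is a direct computation from the spectral theorem. Since $P$ is symmetric, $P^\top = P$, so $M = PP^\top = P^2$ and $M^2 = P^4$. Because $P$ is real symmetric, it has an orthonormal eigenbasis $v_1,\dots,v_N$ with real eigenvalues $\lambda_1,\dots,\lambda_N$. This gives the decomposition $P = \sum_k \lambda_k v_k v_k^\top$. Orthonormality then gives $P^m = \sum_k \lambda_k^m v_k v_k^\top$ for every $m \ge 1$.

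The two eigenvalue-only identities follow immediately. The trace is the sum of the eigenvalues, and $P^m$ has eigenvalues $\lambda_k^m$. Hence $\tr(M) = \tr(P^2) = \sum_k \lambda_k^2$ and $\tr(M^2) = \tr(P^4) = \sum_k \lambda_k^4$.

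For $\Delta$, I read off the diagonal entries of $P^2$ from the spectral decomposition:
\[
(P^2)_{ii} = \sum_k \lambda_k^2 (v_k)_i^2 = \sum_k \lambda_k^2 v_{ik}^2 .
\]
I then square this, expand the square as a double sum over $k$ and $\ell$, and interchange the order of summation with the sum over $i$. This yields
\[
\Delta = \sum_k \sum_\ell \lambda_k^2 \lambda_\ell^2 \sum_i v_{ik}^2 v_{i\ell}^2 .
\]
For the inequality, orthonormality gives $\sum_{i'} v_{i'\ell}^2 = 1$, so $v_{i\ell}^2 \le 1$ for each $i$. Multiplying by $v_{ik}^2 \ge 0$ and summing over $i$ gives $\sum_i v_{ik}^2 v_{i\ell}^2 \le \sum_i v_{ik}^2 = 1$.

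For the constant-diagonal case, suppose $M_{ii} = c$ for all $i$. Then $\tr(M) = Nc$ and $\Delta = Nc^2 = \tr(M)^2/N$. Combined with the two trace identities, all three quantities then depend only on the eigenvalues.

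The only step needing a small argument is the $D$-regular claim, which requires identifying the Metropolis--Hastings chain explicitly. With uniform target $\tau$ and proposal $Q_{ij} = 1/D$ on edges, the off-diagonal rule gives $P_{ij} = \min(Q_{ij}, Q_{ji}\tau_j/\tau_i) = \min(1/D, 1/D) = 1/D$ for $j \ne i$ adjacent to $i$. Each row therefore has $D$ off-diagonal entries equal to $1/D$, which already sum to $1$, so every diagonal entry is zero and $P = Q$ is symmetric. This assumes the graph has no self-loops, so each vertex has exactly $D$ neighbors $j \neq i$. It then follows that $M_{ii} = (P^2)_{ii} = \sum_j P_{ij}^2 = D \cdot D^{-2} = 1/D$ for every $i$. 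The constant-diagonal case above then applies with $c = 1/D$.
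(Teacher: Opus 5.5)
Your proposal is correct and follows the same route as the paper: you use the spectral decomposition of the symmetric $P$ to get the two trace identities and the diagonal of $P^2$, then the unit-norm bound, then the constant-diagonal reduction $\Delta = \tr(M)^2/N$. Your explicit check that the Metropolis--Hastings chain on a $D$-regular graph without self-loops equals the simple random walk, with zero diagonal and hence $M_{ii} = 1/D$, fills in a step the paper only asserts.
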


\begin{remark}
    Given that one-step squared deviation is equivalent to Moran's $I$ by Theorem~\ref{thm:moran-sqd}, one might assume that we can reuse classical moment calculations for Moran's $I$. However, these classical results always assume traceless spatial weights matrix (see e.g.~\cite{CliffOrd1981}), which is very different to our case where $M = PP^\top$ typically has positive trace, requiring us to derive new results. Note that if $\mathsf{tr}(M) = 0$ and $\sqd(f) = 1$ then Theorem~\ref{thm:one-step-moments-uniform} recovers the classical expectation of $-\frac{1}{N-1}$.
\end{remark}

\subsection{High-probability control of diffusion distance}
\label{subsec:hp-diff}

For the $n$-step null and the resulting concentration bound, we 
restrict to normal transition matrices $P$, for which the spectral structure of $M_n = P^n(P^n)^\top$ is fully determined by the eigenvalues of $PP^\top$. In particular, this is the case for a Metropolis-Hastings chain targeting uniformity. The proof of the following Theorem uses Cantelli's inequality and the expectations in Theorem~\ref{thm:one-step-moments-uniform}.

\begin{theorem}[High-probability bound on diffusion distance under permutation]
\label{thm:hp-normal-permutation}
Assume $N \ge 4$. Let $P$ be an $N \times N$ normal transition matrix
with uniform stationary distribution $\tau = \frac{1}{N}\mathbf{1}$, let
$f \in \Delta_N$ be a probability distribution, and let $\pi$ be a uniformly random
permutation of $\{1,\dots,N\}$. For $n \ge 0$, define
\[
  E_n := \mathbb{E}_\pi\bigl[\sqd(f_\pi P^n)\bigr], \qquad
  V_n := \Var_\pi\bigl(\sqd(f_\pi P^n)\bigr), \qquad \rho_n(\epsilon) := \epsilon^2 - N E_n
\]
where $E_n$ and $V_n$ are obtained from Theorem~\ref{thm:one-step-moments-uniform} by
replacing $P$ with $P^n$. For $n$ large enough that $\rho_n(\epsilon) > 0$, we have
\[
  \mathbb{P}_\pi\bigl(\diff^P_{2,\epsilon}(f_\pi \to \tau) > n\bigr)
  \le
  \frac{N^2 V_n}{N^2 V_n + \rho_n(\epsilon)^2}.
\]
\end{theorem}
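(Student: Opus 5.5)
The plan is to reduce the event $\{\diff^P_{2,\epsilon}(f_\pi \to \tau) > n\}$ to a one-sided tail event for the random variable $X := \sqd(f_\pi P^n)$ and then apply Cantelli's inequality with the moments $E_n, V_n$.

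\emph{Step 1: reduction to a tail event.} By Definition~\ref{def:diff-distance}, $\diff^P_{2,\epsilon}(f_\pi\to\tau) > n$ means that $\|f_\pi P^k - \tau\|_2 \ge \epsilon$ for every $k \le n$. In particular it holds for $k = n$. So the event is contained in $\{\|f_\pi P^n - \tau\|_2 \ge \epsilon\}$. By \eqref{eq:sqd_definition} this is the event $\{N X \ge \epsilon^2\}$, that is, $\{X - E_n \ge \rho_n(\epsilon)/N\}$. (One could sharpen this using the monotonicity $\|xP\|_2\le\|x\|_2$ from the proof of Proposition~\ref{prop:stability-initial}, but only the containment is needed.)

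\emph{Step 2: identify the moments.} Since $P$ has uniform stationary distribution and $P^n$ is again a transition matrix with $\tau P^n = \tau$, Theorem~\ref{thm:one-step-moments-uniform} applies verbatim with $P$ replaced by $P^n$ and $M$ replaced by $M_n = P^n(P^n)^\top$. This gives $E_n = \mathbb{E}_\pi[X]$ and $V_n = \Var_\pi(X)$ exactly. Normality of $P$ is not needed for this step. It only ensures that $\tr(M_n)$ and $\tr(M_n^2)$ are expressible through $|\lambda_i|^{2n}$ and $|\lambda_i|^{4n}$, as in Observation~\ref{obs:symmetricP}, which makes the bound computable. I would note this but not rely on it.

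\emph{Step 3: Cantelli.} For $t = \rho_n(\epsilon)/N > 0$, Cantelli's inequality gives
\[
\mathbb{P}(X - E_n \ge t) \le \frac{V_n}{V_n + t^2} = \frac{N^2V_n}{N^2 V_n + \rho_n(\epsilon)^2}.
\]
Multiplying the numerator and denominator by $N^2$ yields exactly the stated form. If $V_n = 0$, then $X = E_n$ almost surely, and since $E_n < \epsilon^2/N$ the probability is $0$, consistent with the bound.

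The only point requiring care is the bookkeeping of the factor $N$ in $\sqd$ versus $\|\cdot\|_2^2$, and checking that the hypotheses of Theorem~\ref{thm:one-step-moments-uniform} hold for $P^n$. I do not expect a genuine obstacle, since the substantive work is contained in the moment formulas of Theorem~\ref{thm:one-step-moments-uniform}.
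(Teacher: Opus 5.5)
Your proposal is correct and follows essentially the same route as the paper. You reduce $\{\diff^P_{2,\epsilon}(f_\pi\to\tau)>n\}$ to the tail event $\{N\,\sqd(f_\pi P^n)\ge\epsilon^2\}$, obtain the moments from Theorem~\ref{thm:one-step-moments-uniform} applied to the bistochastic matrix $P^n$, and apply Cantelli. The differences are cosmetic: you apply Cantelli to $\sqd(f_\pi P^n)$ with $t=\rho_n(\epsilon)/N$ rather than to $\|f_\pi P^n-\tau\|_2^2$ with $t=\rho_n(\epsilon)$, you use only the containment of events (which suffices, whereas the paper asserts an equivalence that tacitly relies on $\ell_2$-monotonicity), and you correctly note that normality of $P$ is not needed for the bound itself.
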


\begin{proof}
Since $P$ is bistochastic with uniform stationary distribution $\tau = \frac{1}{N}\mathbf{1}$, for every probability vector $x$,
\[
  \|x - \tau\|_2^2
  = \sum_{i=1}^N \Bigl(x_i - \frac{1}{N}\Bigr)^2
  = N \cdot \frac{1}{N}\sum_{i=1}^N \Bigl(x_i - \frac{1}{N}\Bigr)^2
  = N\,\sqd(x).
\]
Applying this to $x = f_\pi P^n$,
\[
  \|f_\pi P^n - \tau\|_2^2 = N\,\sqd(f_\pi P^n).
\]
Therefore
\begin{align*}
  \diff^P_{2,\epsilon}(f_\pi \to \tau) > n
  &\iff \|f_\pi P^n - \tau\|_2 \ge \epsilon \\
  &\iff \|f_\pi P^n - \tau\|_2^2 \ge \epsilon^2 \\
  &\iff N\,\sqd(f_\pi P^n) \ge \epsilon^2.
\end{align*}

Since $P$ is normal, $P^n$ is also normal, and
\[
  P^n(P^n)^\top = U\,\diag(\mu_1^n, \dots, \mu_N^n)\,U^\top,
\]
where $PP^\top = U\,\diag(\mu_1,\dots,\mu_N)\,U^\top$ is an orthogonal diagonalization with $\mu_1 = 1$. In particular, $P^n$ is bistochastic, since $P^n\mathbf{1} = \mathbf{1}$ and $\mathbf{1}^\top P^n = \mathbf{1}^\top$. Therefore $M_n := P^n(P^n)^\top$ satisfies $M_n\mathbf{1} = \mathbf{1}$, and Theorem~\ref{thm:one-step-moments-uniform} applies with $P$ replaced by $P^n$.
This gives
\begin{align*}
  E_n
  &:= \mathbb{E}_\pi\bigl[\sqd(f_\pi P^n)\bigr]
  = \sqd(f)\,\frac{\tr(M_n) - 1}{N-1}, \\[4pt]
  V_n
  &:= \Var_\pi\bigl(\sqd(f_\pi P^n)\bigr)
  = \mathbb{E}_\pi\bigl[\sqd(f_\pi P^n)^2\bigr] - E_n^2,
\end{align*}
where $\mathbb{E}_\pi[\sqd(f_\pi P^n)^2]$ is given explicitly by Theorem~\ref{thm:one-step-moments-uniform} with $M$, $\Delta$ replaced by $M_n$, $\Delta_n$.

Since $\|f_\pi P^n - \tau\|_2^2 = N\,\sqd(f_\pi P^n)$, it follows by linearity and scaling that
\begin{align*}
  \mathbb{E}_\pi\bigl[\|f_\pi P^n - \tau\|_2^2\bigr]
  &= N E_n, \\[4pt]
  \Var_\pi\bigl(\|f_\pi P^n - \tau\|_2^2\bigr)
  &= N^2 V_n.
\end{align*}

\textbf{Cantelli's inequality.}
Let $X := \|f_\pi P^n - \tau\|_2^2$, so that $\mathbb{E}_\pi[X] = NE_n$
and $\Var_\pi(X) = N^2 V_n$. We wish to bound
$\mathbb{P}_\pi(X \ge \epsilon^2)$. Assuming $\rho_n(\epsilon) :=
\epsilon^2 - NE_n > 0$, we write
\[
  \mathbb{P}_\pi(X \ge \epsilon^2)
  = \mathbb{P}_\pi\bigl(X - NE_n \ge \epsilon^2 - NE_n\bigr)
  = \mathbb{P}_\pi\bigl(X - NE_n \ge \rho_n(\epsilon)\bigr).
\]
Cantelli's inequality states that for any random variable $X$ with finite variance and any $t > 0$,
\[
  \mathbb{P}(X - \mathbb{E}[X] \ge t)
  \le \frac{\Var(X)}{\Var(X) + t^2}.
\]
Applying this with $t = \rho_n(\epsilon) > 0$ and $\Var_\pi(X) = N^2 V_n$,
\[
  \mathbb{P}_\pi\bigl(X - NE_n \ge \rho_n(\epsilon)\bigr)
  \le \frac{N^2 V_n}{N^2 V_n + \rho_n(\epsilon)^2}.
\]
Since
\[
  \mathbb{P}_\pi\bigl(\diff^P_{2,\epsilon}(f_\pi \to \tau) > n\bigr)
  = \mathbb{P}_\pi(X \ge \epsilon^2),
\]
the stated bound follows.
\end{proof}

\subsection{Efficient bounds for permutation $p$-tests}

Bringing together Theorems~\ref{thm:one-step-moments-uniform} and~\ref{thm:hp-normal-permutation}, we obtain a strategy for bounding the $p$-value under a permutation null model when $P$ is a symmetric matrix (for example, the Metropolis-Hastings transition matrix). This strategy avoids having to sample many permutations. Given that the number of permutations grows like $N!$, this is very useful for large $N$. Suppose we are given $P$ and an observed probability distribution $f$. We perform the following steps.

\begin{enumerate}
    \item Compute $n = \diff_{2,\epsilon}^P(f \to \tau)$, the observed diffusion distance.
    \item Obtain eigenvalues $\lambda_k$ and eigenvectors $v_k$ for $P$. 
    \item $M_n := P^n (P^n)^\top = P^{2n}$ since $P$ is symmetric, and $P^{2n}$ has eigenvalues $\lambda_k^{2n}$ and the same eigenvectors $v_k$. Observation~\ref{obs:symmetricP} applied to $P^{n}$ therefore gives the three structural quantities directly from the
      spectrum of $P$:
      \[
        \tr(M_n) = \sum_k \lambda_k^{2n},
        \qquad
        \tr(M_n^2) = \sum_k \lambda_k^{4n},
        \qquad
        \Delta_n = \sum_i \Big( \sum_k \lambda_k^{2n} v_{ik}^2 \Big)^{\!2}.
      \]
      Note that the eigendecomposition of $P$ is computed once for a given graph; each subsequent computation then requires only raising the eigenvalues to a power. Substituting these into Theorem~\ref{thm:one-step-moments-uniform} yields
      $E_n := \mathbb{E}_\pi\bigl[\sqd(f_\pi P^n)\bigr]$ and
      $V_n := \Var_\pi\bigl(\sqd(f_\pi P^n)\bigr)$.
    \item Provided $\epsilon^2 - N \mathbb{E}_\pi\bigl[\sqd(f_\pi P^n)\bigr] > 0$, we use Theorem~\ref{thm:hp-normal-permutation} to obtain an upper bound for the required $p$-value $\mathbb{P}_\pi\bigl(\diff^P_{2,\epsilon}(f_\pi \to \tau) > n\bigr)$. If the inequality is violated, then the significance is too marginal to be determined by moment methods, and direct sampling is required.
\end{enumerate}

For very large $N$, we can compute only the $k$ largest eigenvalues and their eigenvectors, and treat Step 3 as an approximation. The quality of the approximation depends on the decay of the eigenvalues of $P$, which one might hope to be fast for geospatial graphs of fine granularity.

\section{Numerical experiments}
\label{sec:experiments}

\subsection{Measuring power on stochastic block models}\label{sec:power}

We test the power of the diffusion distance permutation test on randomly generated graphs and distributions. We use a stochastic block model (SBM) to generate a graph $G$ with $40$ nodes divided evenly into two classes $A$ and $B$. An edge appears between nodes of the same class with probability $0.5$ and between nodes of different classes with probability $0.05$. For each node, a random scalar value is drawn uniformly from $[0,1]$. Values for nodes in class $B$ are uniformly shifted by a parameter $\alpha \geq 0$; see Figure~\ref{fig:sampledataset}. All values are normalized to sum to $1$ before computing diffusion distance. As $\alpha$ increases, the spatial dependence in the data increases and should be detectable by diffusion distance and Moran's $I$ tests. We run $1,000$ trials with $1,000$ permutations each, and reject the null hypothesis of spatial independence for \textit{p}-values less than $0.05$. We use $p=2$ and $\epsilon = 0.01$ throughout for diffusion distance based on the heuristics in Remark~\ref{rem:heuristic}. Figure~\ref{fig:rejection} shows that when $\alpha = 0$, there is a false positive rate of about $0.05$ for both methods as expected. As $\alpha$ increases, diffusion distance outperforms Moran's $I$ in terms of power, likely because the data is locally noisy but globally biased. As an additional precaution, we rerun the experiment with $\epsilon = 0.001$ and get almost identical results (see the Appendix).

\begin{figure}[h]
    \centering
    \begin{subfigure}{0.58\textwidth}
\includegraphics[width=\linewidth]{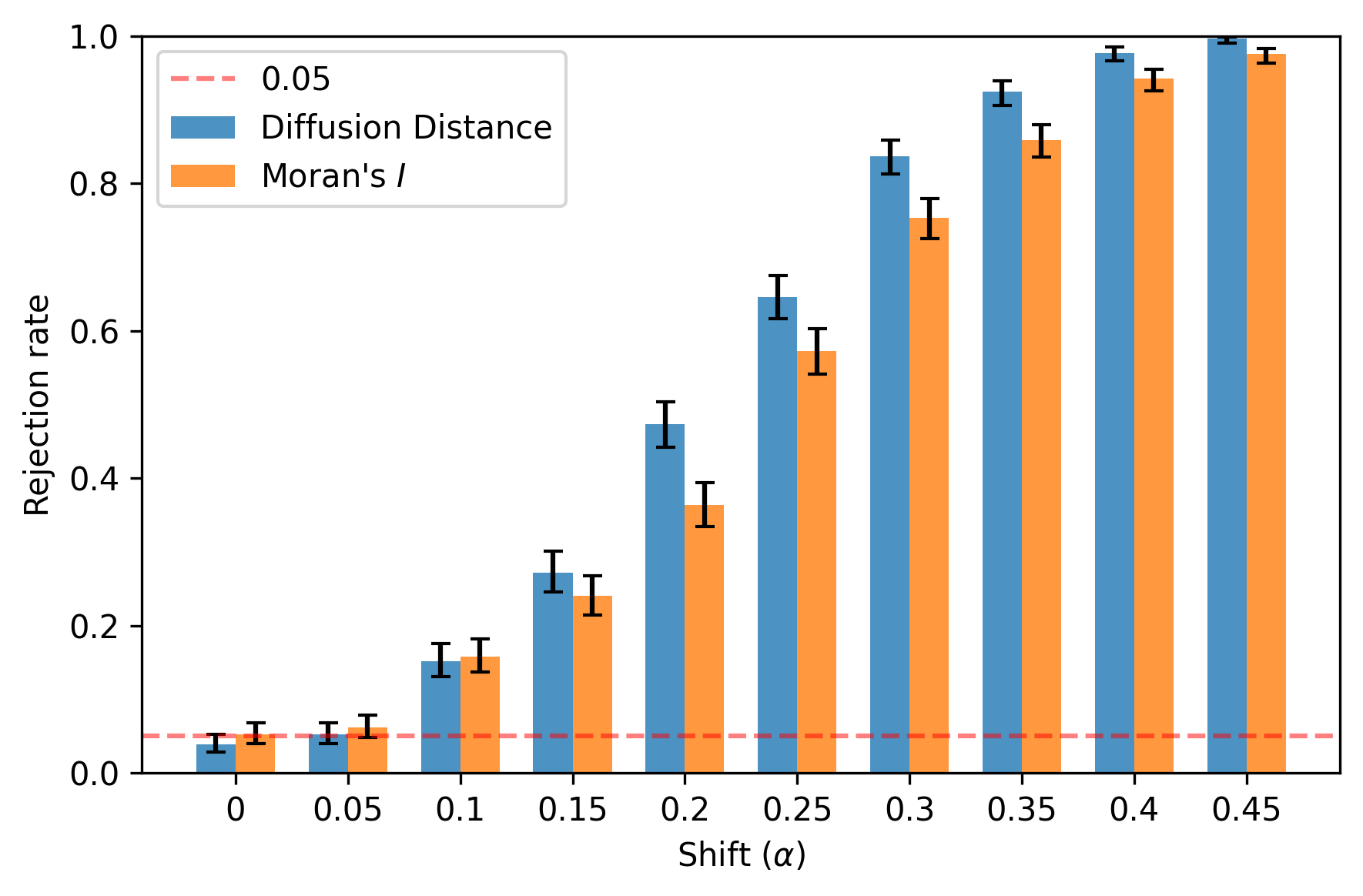}
\caption{Rejection rates with 95\% Wilson confidence interval}\label{fig:rejection}
    \end{subfigure}
    \begin{subfigure}{0.4\textwidth}
\includegraphics[width=\linewidth]{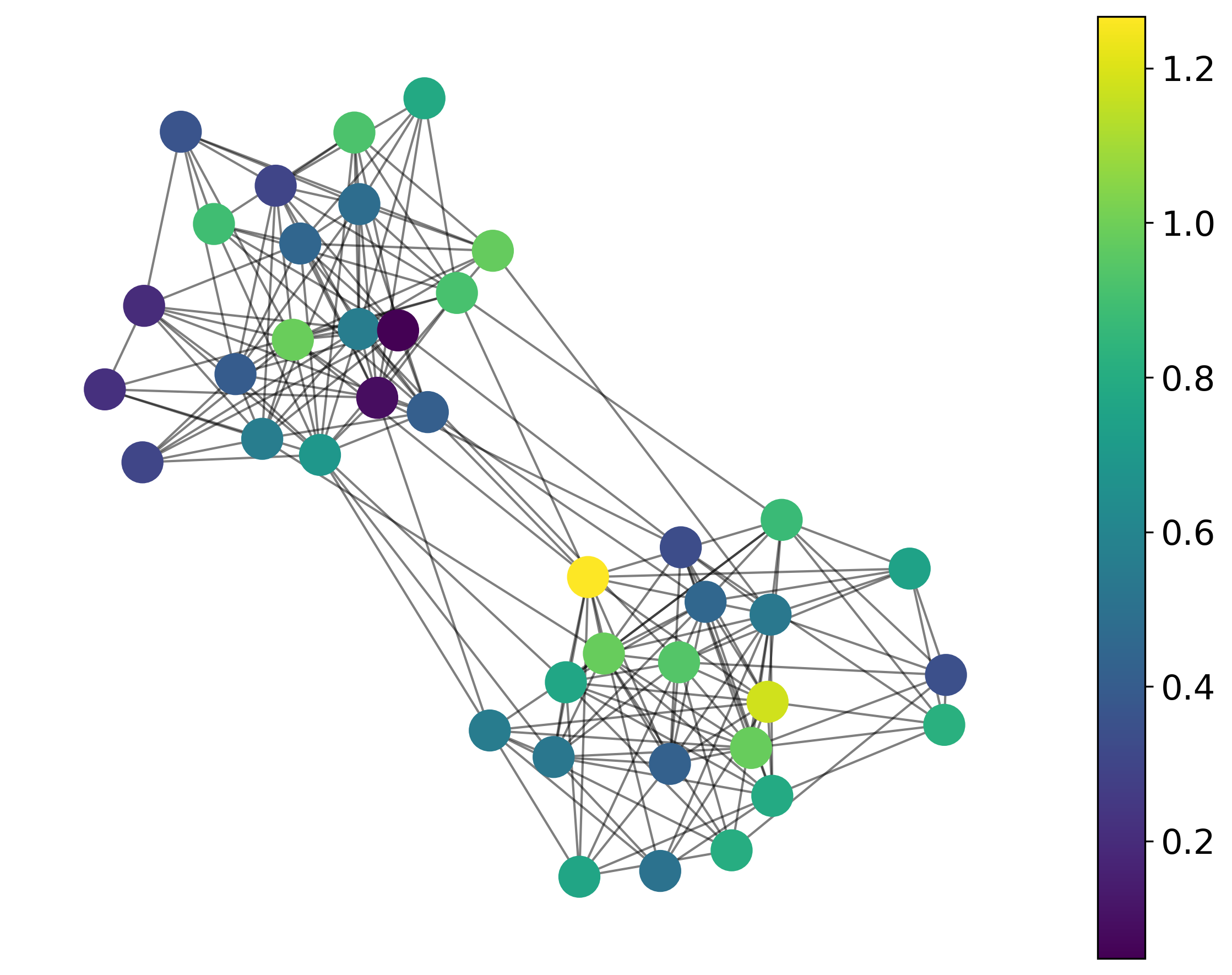}
\caption{Sample dataset with $\alpha = 0.2$}\label{fig:sampledataset}
    \end{subfigure}
    \caption{Power analysis of diffusion distance and Moran's $I$ permutation tests on SBM graphs.}
    \label{fig:power}
\end{figure}

\subsection{U.S. city demographics}\label{sec:demos}

We study geospatial clustering using a dataset of $100$ U.S. cities with boundaries from the CDC's $500$ city project~\cite{CDC500CitiesBoundaries}, and Census demographic data obtained from NHGIS~\cite{IPUMS2021}. For each city, we use the dual graph of the tract map, with vertices representing census tracts and edges connecting geographically adjacent tracts; all computations are restricted to the largest connected component. Figure~\ref{fig:chicago_share_dualgraph} illustrates the dual graph  construction for Chicago. For tract $i$, let $B_i$ and $T_i$ denote the Black and total populations, respectively, and set $s_i = B_i/T_i$, with $s_i = 0$ when
$T_i = 0$. Our initial distribution $f$ is defined by $f_i = s_i / \sum_i s_i$. For each city, we compute the Metropolis Hastings diffusion distance to uniformity using $f$. We choose $\epsilon = 0.25$ and  $p = 1$ based on Remark~\ref{rem:heuristic}. We then compare diffusion distance with Moran's $I$, computed on the same census-tract graph for each city. Figure~\ref{fig:mi_vs_diffusion} shows a clear positive association between the two measures when diffusion distance is plotted on a log-scale (as expected, since it measures a number of iterations of roughly exponential decay), with a rank correlation of $0.728$ over 100 cities. Diffusion distance and Moran's $I$ therefore capture overlapping but nonidentical aspects of spatial structure.  

To more closely observe the types of urban segregation treated differently by the two methods, we focus on cities with 20\% or higher Black population percentage citywide. Among these, the six cities labeled in Figure~\ref{fig:mi_vs_diffusion} stand out as being ranked significantly higher by Moran's $I$ than by diffusion distance. Roughly speaking, this means they are more locally clustered than they are globally clustered. Looking at the city graphs in Figure~\ref{fig:citygraph6}, we see at least one common pattern: non-Black areas near the city center totally or partially surrounded by high Black population areas (this pattern is much less pronounced in Toledo than in the other five, however). This could be the trace left by actions such as ``slum clearance'' in the 20$^{\mathrm{th}}$ century~\cite{LAVOICE2024105153} which displaced Black residents from specific areas. The low diffusion distance rank indicates that while these cities are segregated, this segregation is not stable under natural graph dynamics. The boundary between Black and non-Black neighborhoods is not a natural bottleneck and could be an artifact of specific historical events.

\begin{figure}[h]
  \centering
\includegraphics[width=0.35\textwidth]{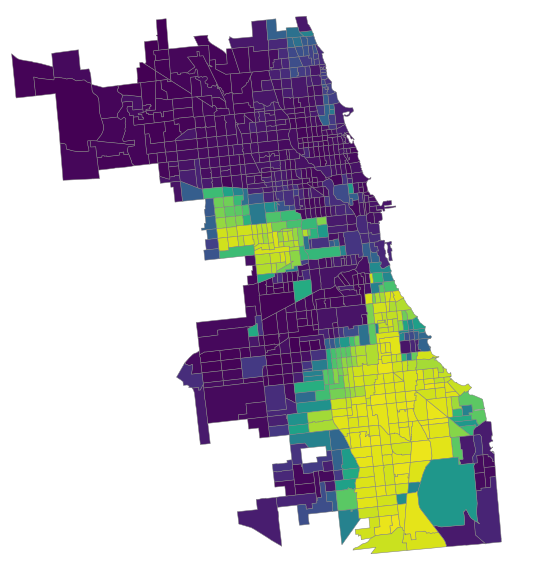}%
  \includegraphics[width=0.45\textwidth]{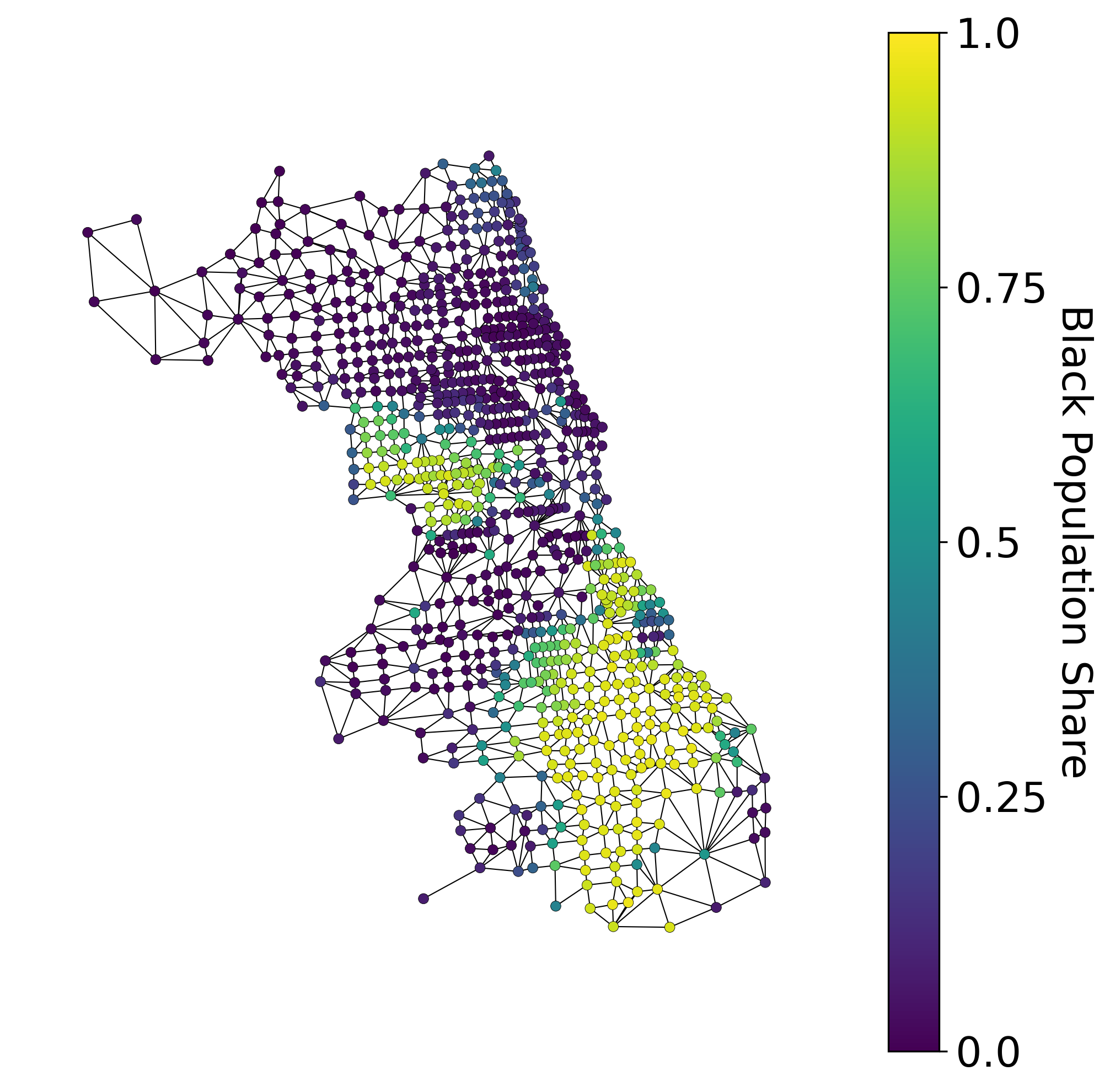}
  \caption{Chicago tract map and corresponding dual graph, both colored
    by tract-level Black population share.}
  \label{fig:chicago_share_dualgraph}
\end{figure}

\begin{figure}[h]
  \centering
  \begin{subfigure}{0.49\textwidth}
        \includegraphics[width=\textwidth]{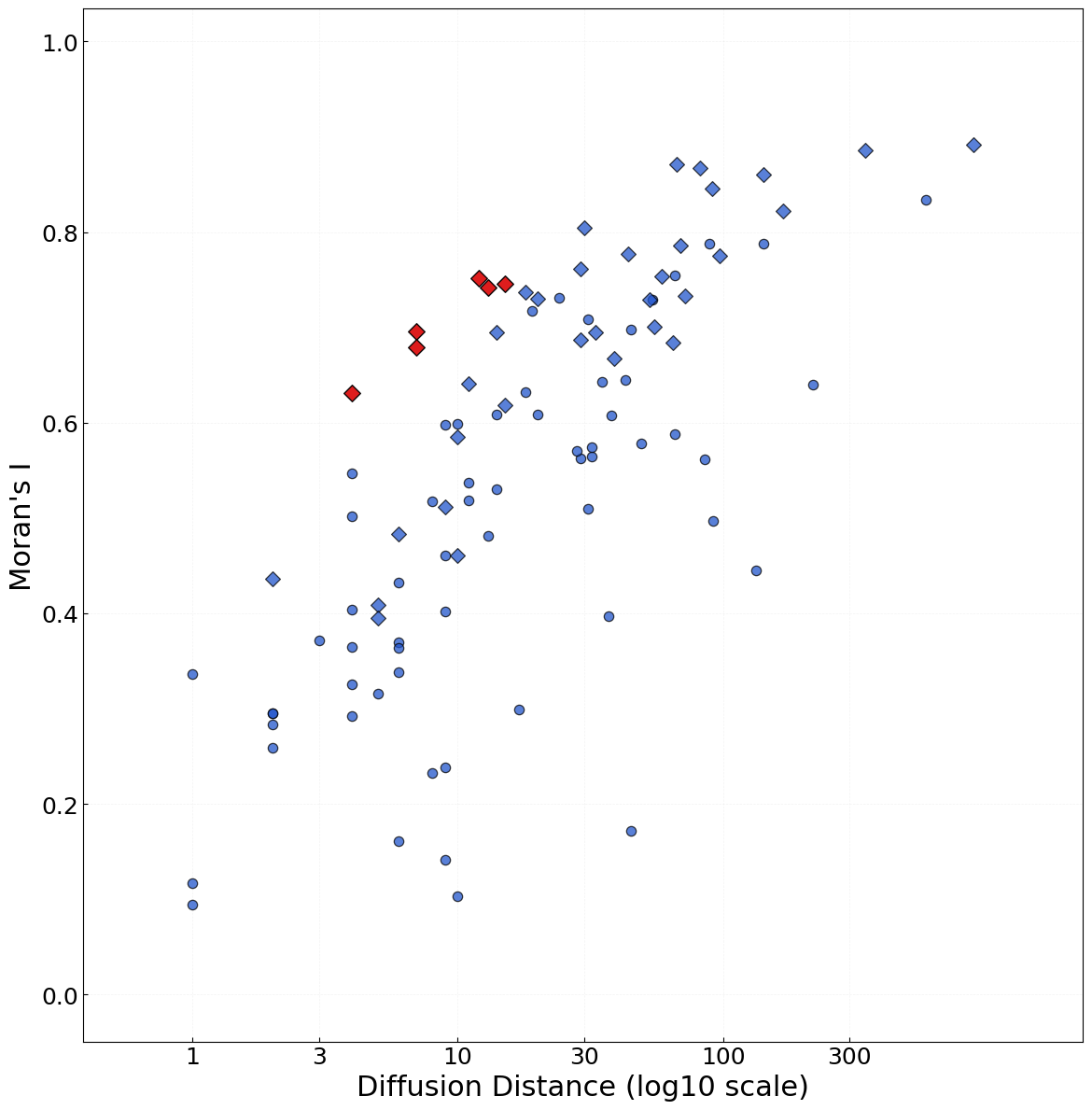}
  \end{subfigure}\hfill
   \begin{subfigure}{0.49\textwidth}
         \includegraphics[width=\textwidth]{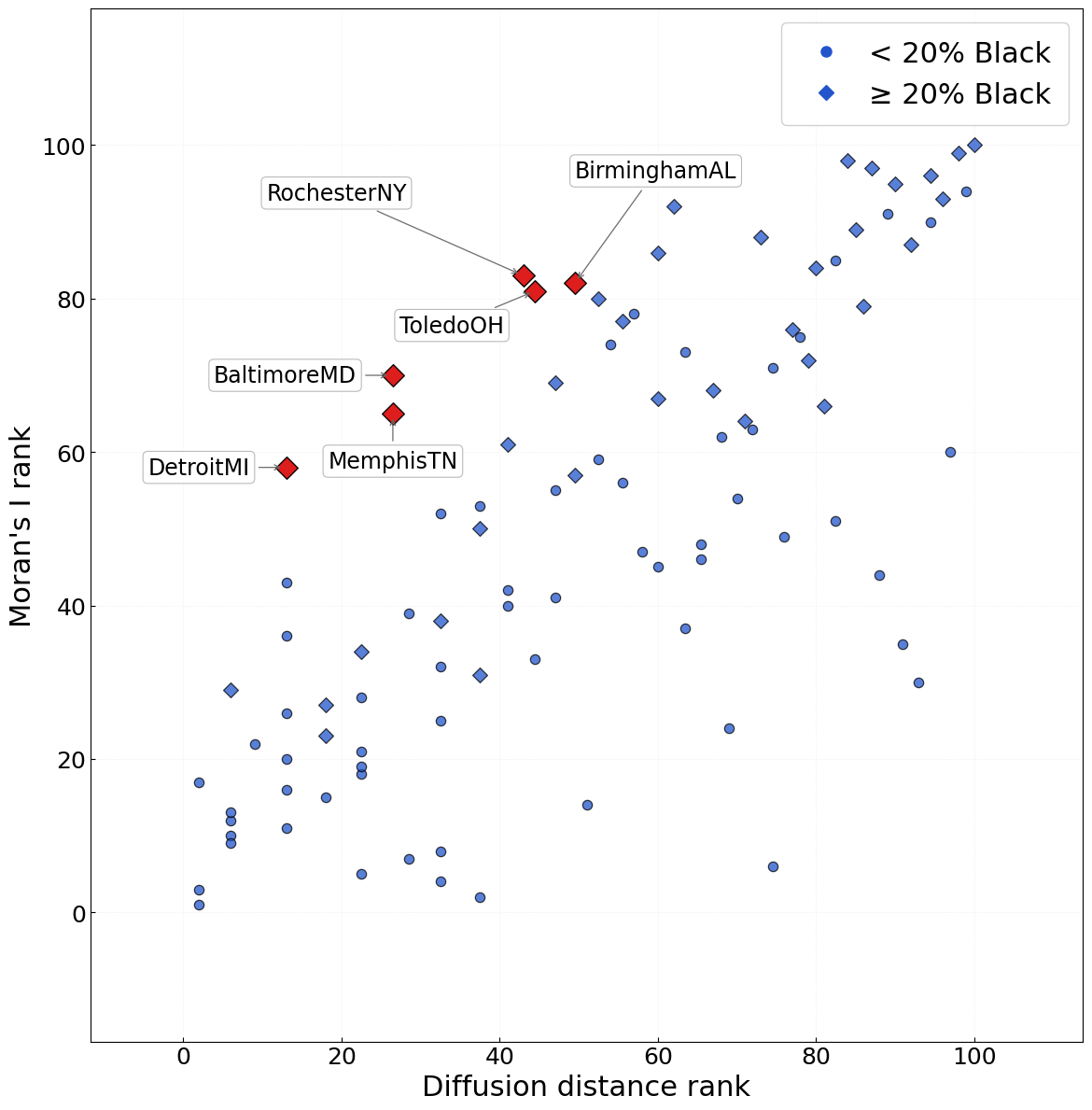}
  \end{subfigure}
      \caption{Comparing Metropolis-Hastings diffusion distance to uniformity and Moran's $I$ on Black population data for 100 U.S. cities using scatter plots for the raw values with a log $x$-axis (left), and for the ranks among the 100 cities (right). }  \label{fig:mi_vs_diffusion}

\end{figure}

\begin{figure}[h]
\centering
\begin{subfigure}{0.28\textwidth}
    \centering
    \includegraphics[width=\textwidth]{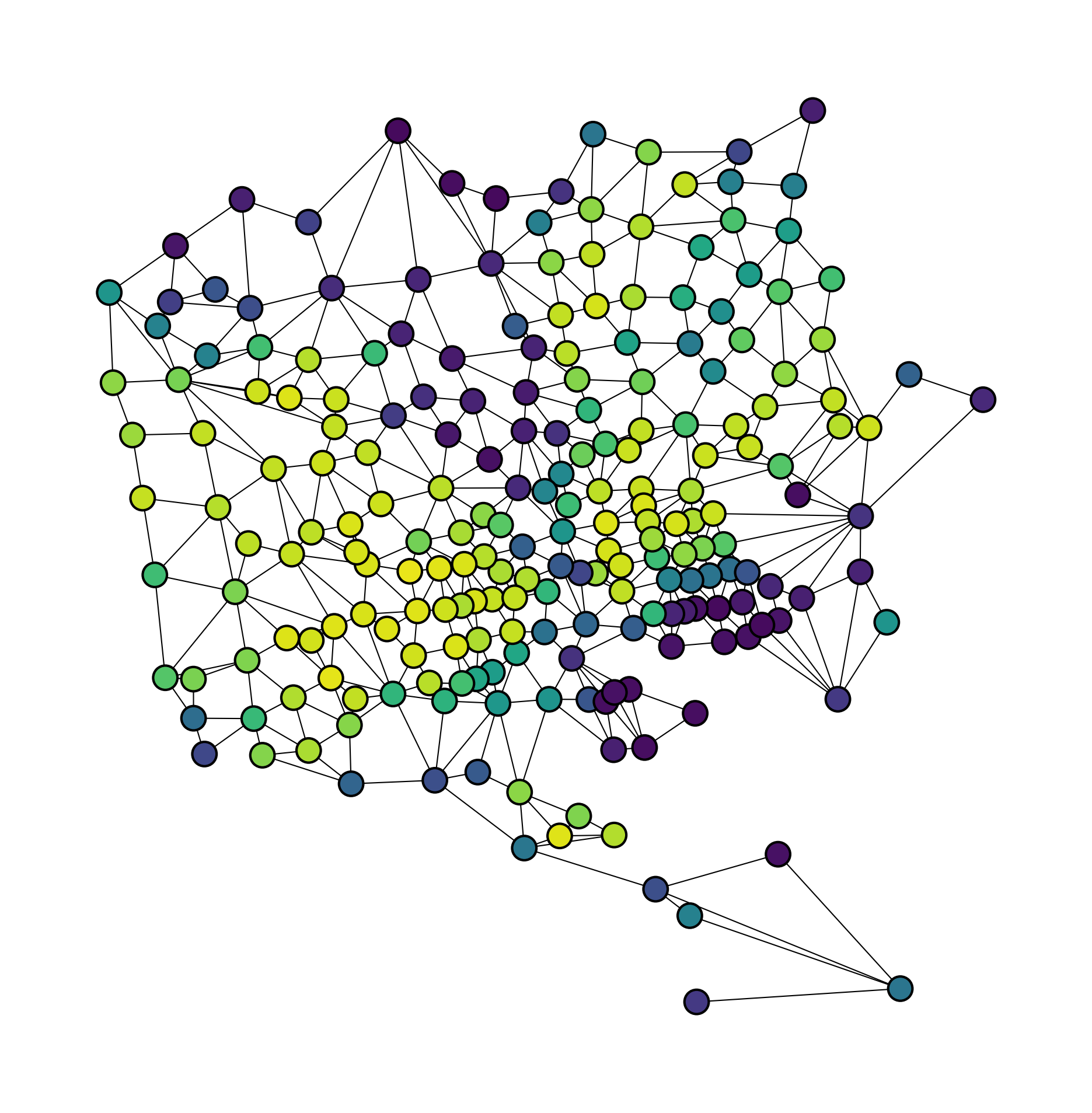}
    \caption{Baltimore MD}
\end{subfigure}
\begin{subfigure}{0.28\textwidth}
    \centering
    \includegraphics[width=\textwidth]{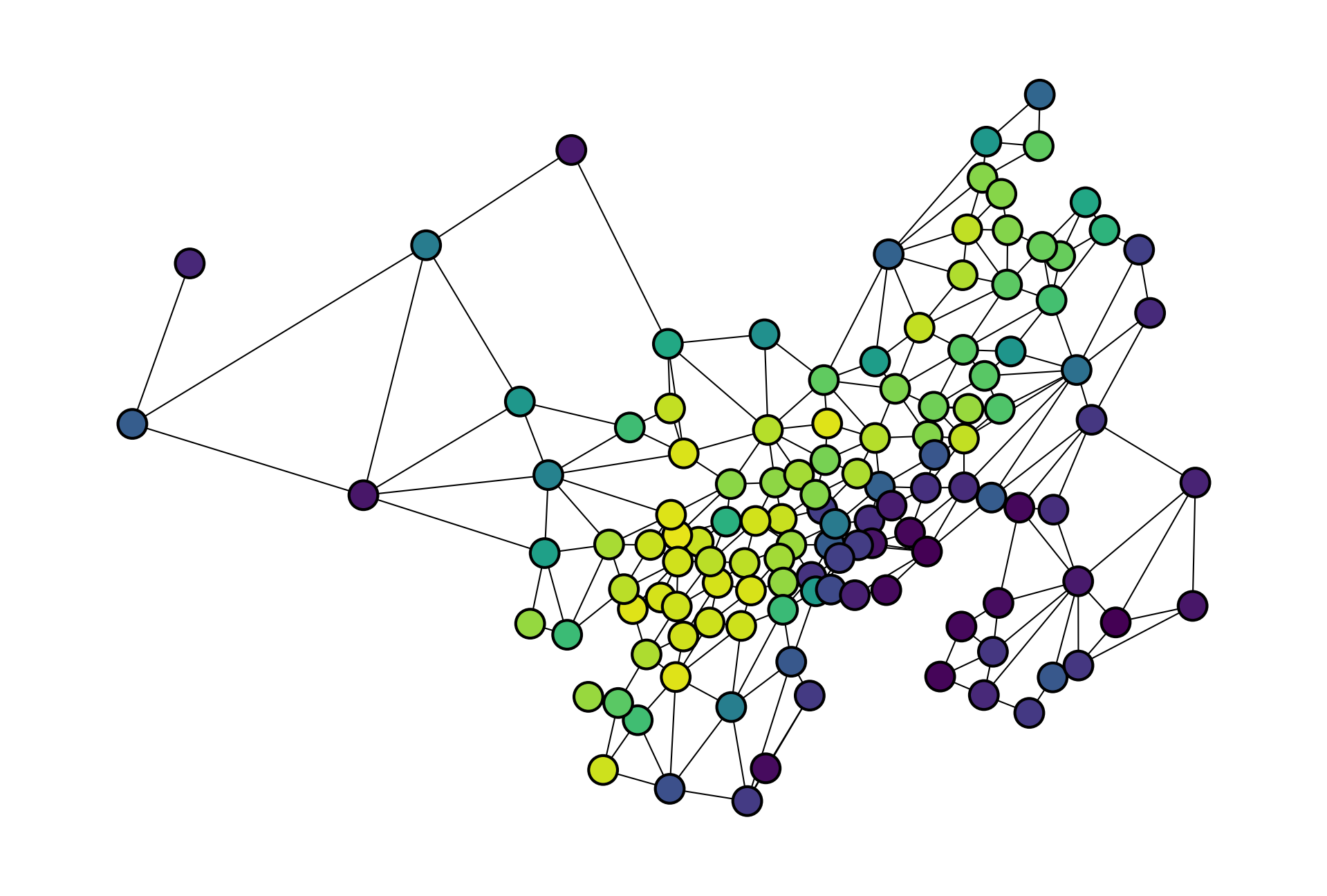}
    \caption{Birmingham AL}
\end{subfigure}
\begin{subfigure}{0.28\textwidth}
    \centering
    \includegraphics[width=\textwidth]{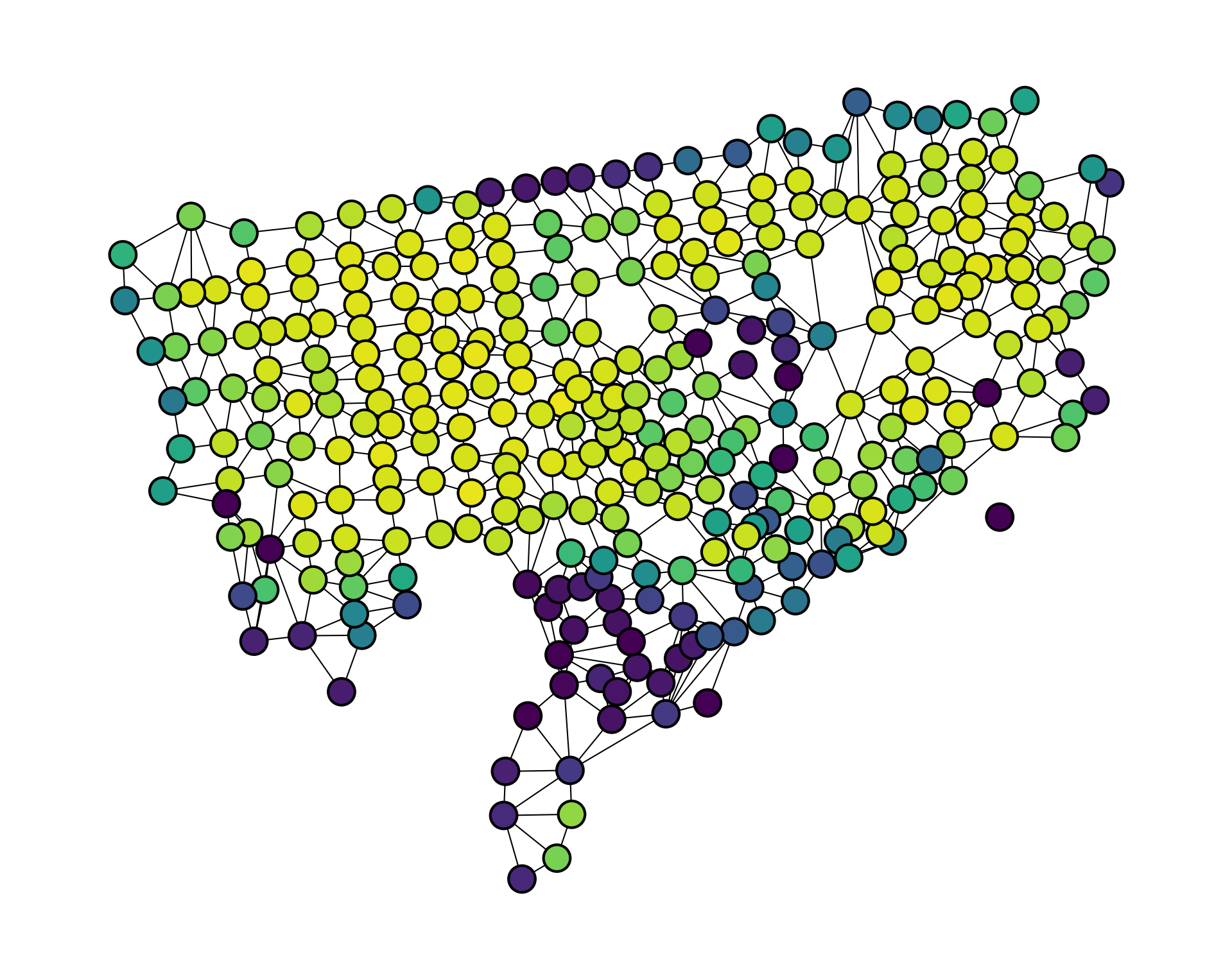}
    \caption{Detroit MI}
\end{subfigure}

\begin{subfigure}{0.28\textwidth}
    \centering
    \includegraphics[width=\textwidth]{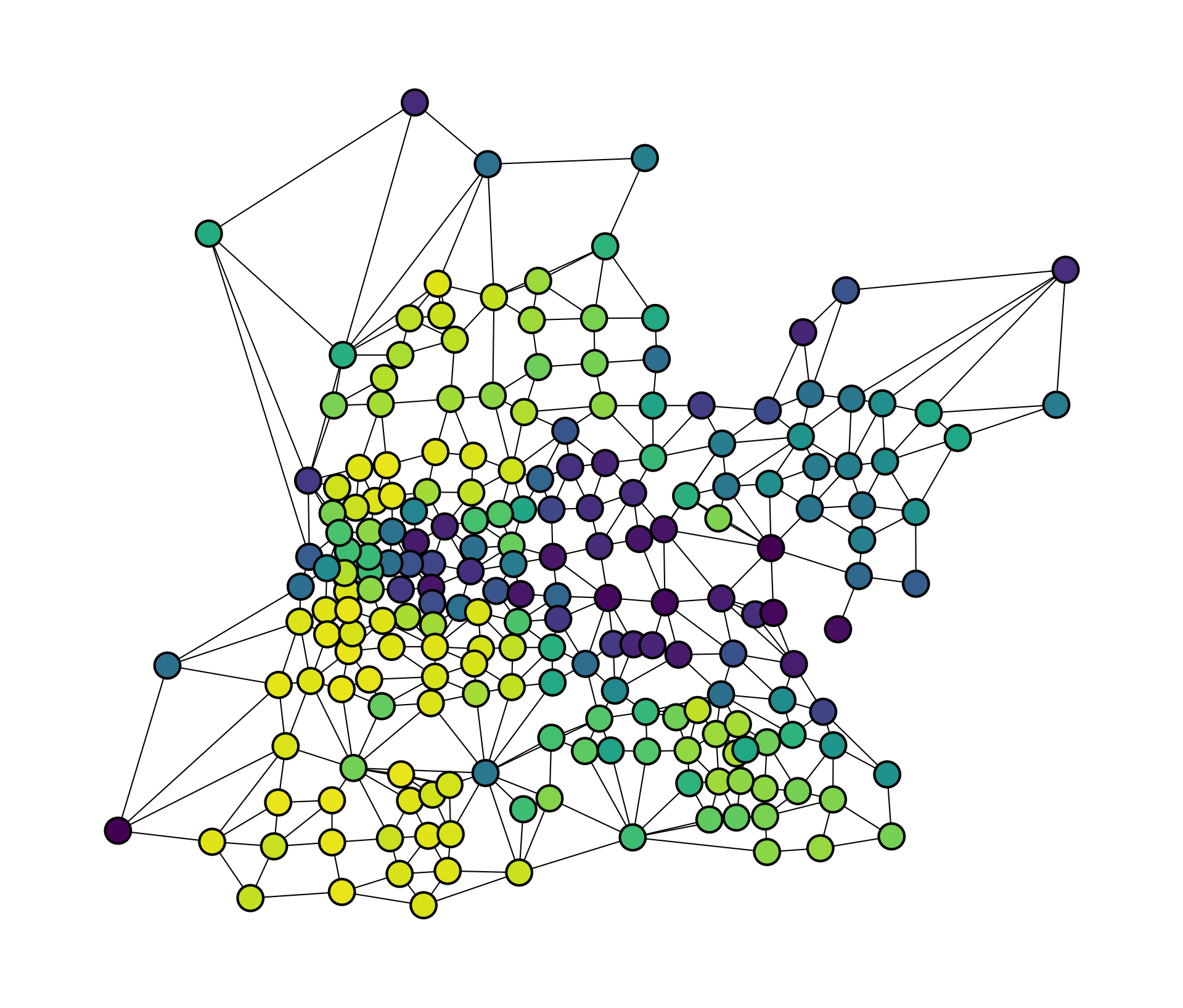}
    \caption{Memphis TN}
\end{subfigure}
\begin{subfigure}{0.28\textwidth}
    \centering
    \includegraphics[width=\textwidth]{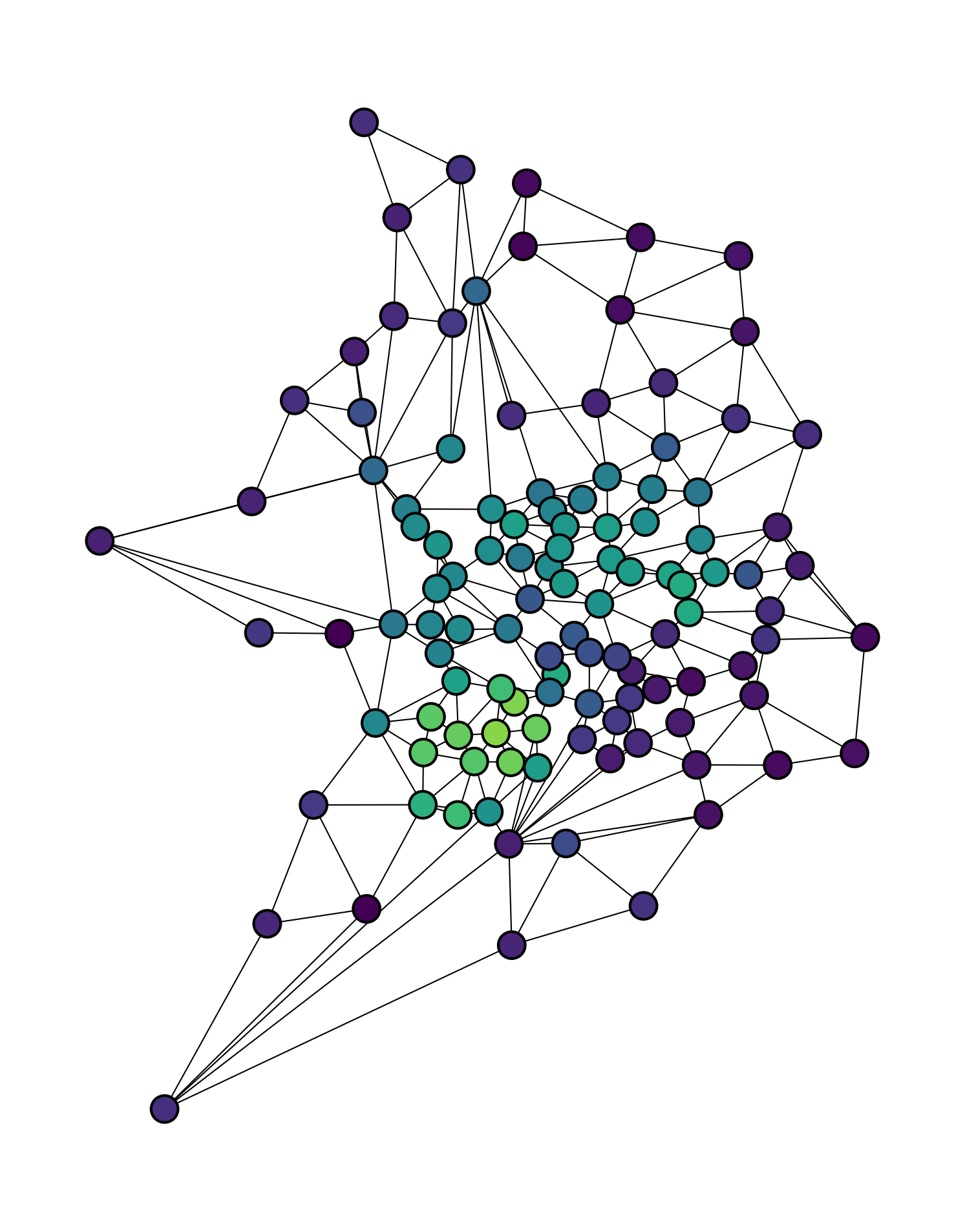}
    \caption{Rochester NY}
\end{subfigure}
\begin{subfigure}{0.4\textwidth}
    \centering
    \includegraphics[width=\textwidth]{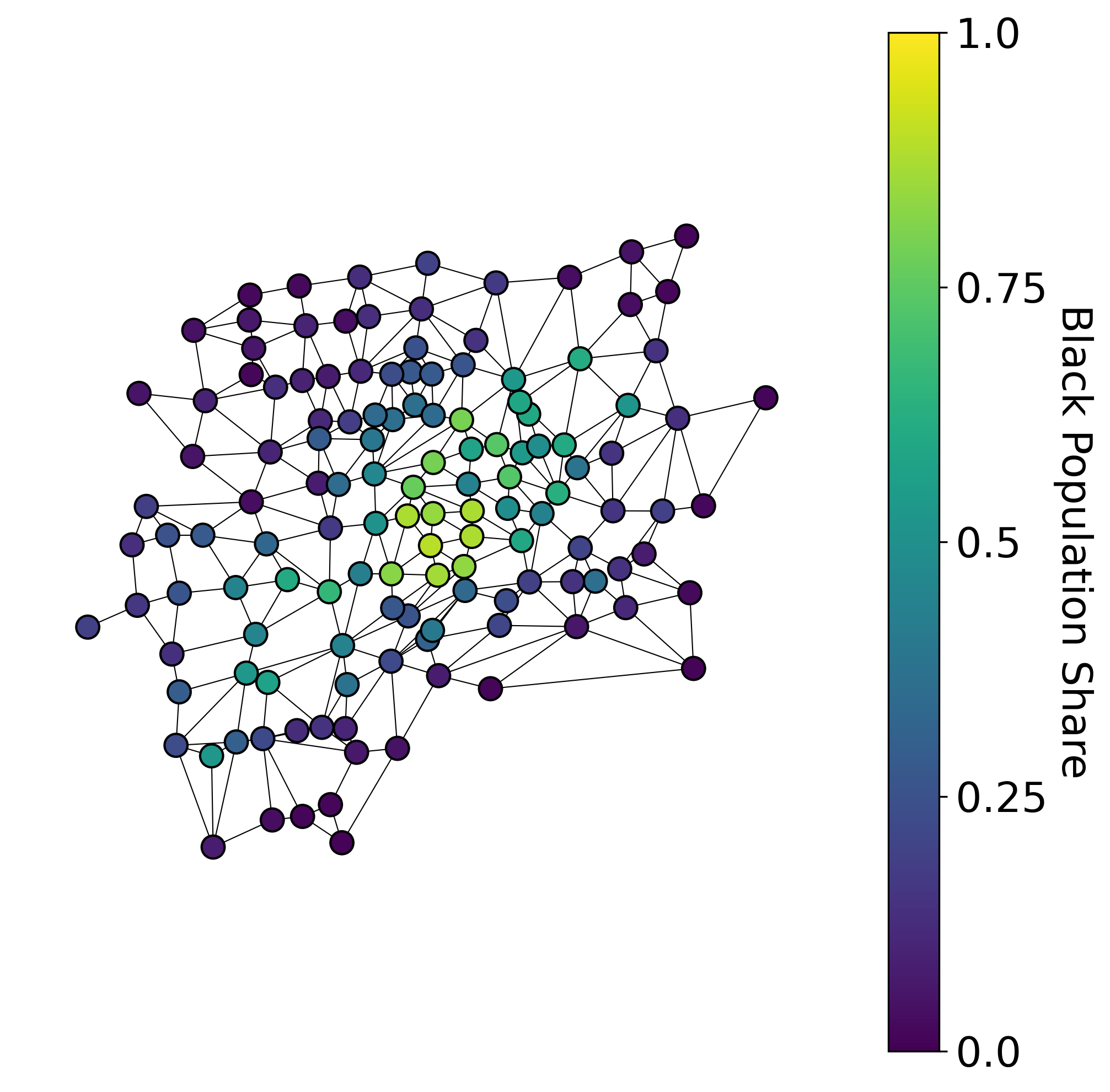}
    \caption{Toledo OH}
\end{subfigure}
\caption{Dual graphs of six cities with substantial Black populations whose rank by Moran's $I$ (local clustering) significantly exceeds their rank by diffusion distance (global clustering). See the red points in Figure~\ref{fig:mi_vs_diffusion}. Color scale shows the Black population share by Census tract.}
\label{fig:citygraph6}
\end{figure}

\clearpage 

\bibliographystyle{plain}
\bibliography{refs}
\appendix

\section{Computational and data details}

The experiments in Section~\ref{sec:experiments} were run on laptops/MacBooks using Python. The largest city dual graph is New York NY with 1848 nodes. Computing the Moran's $I$ and diffusion distance values in Figure~\ref{fig:mi_vs_diffusion} is almost instantaneous since the sparse Metropolis-Hastings transition matrix is easy to compute and exponentiate as needed. The power analysis in Figure~\ref{fig:power} requires multiple permutations and multiple trials and so takes longer. It takes just under a second to perform a single diffusion distance permutation test on one SBM graph with 1,000 permutations; this was on a MacBook with a 2.3 GHz 8-Core Intel Core i9 processor and 64 GB of RAM. The entire experiment for Figure~\ref{fig:power} therefore takes about 10 minutes per $\alpha$ value. For much larger graphs, we would expect to use the approximations in Section~\ref{sec:statistics}. A version of Figure~\ref{fig:power} with $\epsilon = 0.001$ is shown in Figure~\ref{fig:power2}, with almost identical results.

\begin{figure}[h]
    \centering
\includegraphics[width=0.5\linewidth]{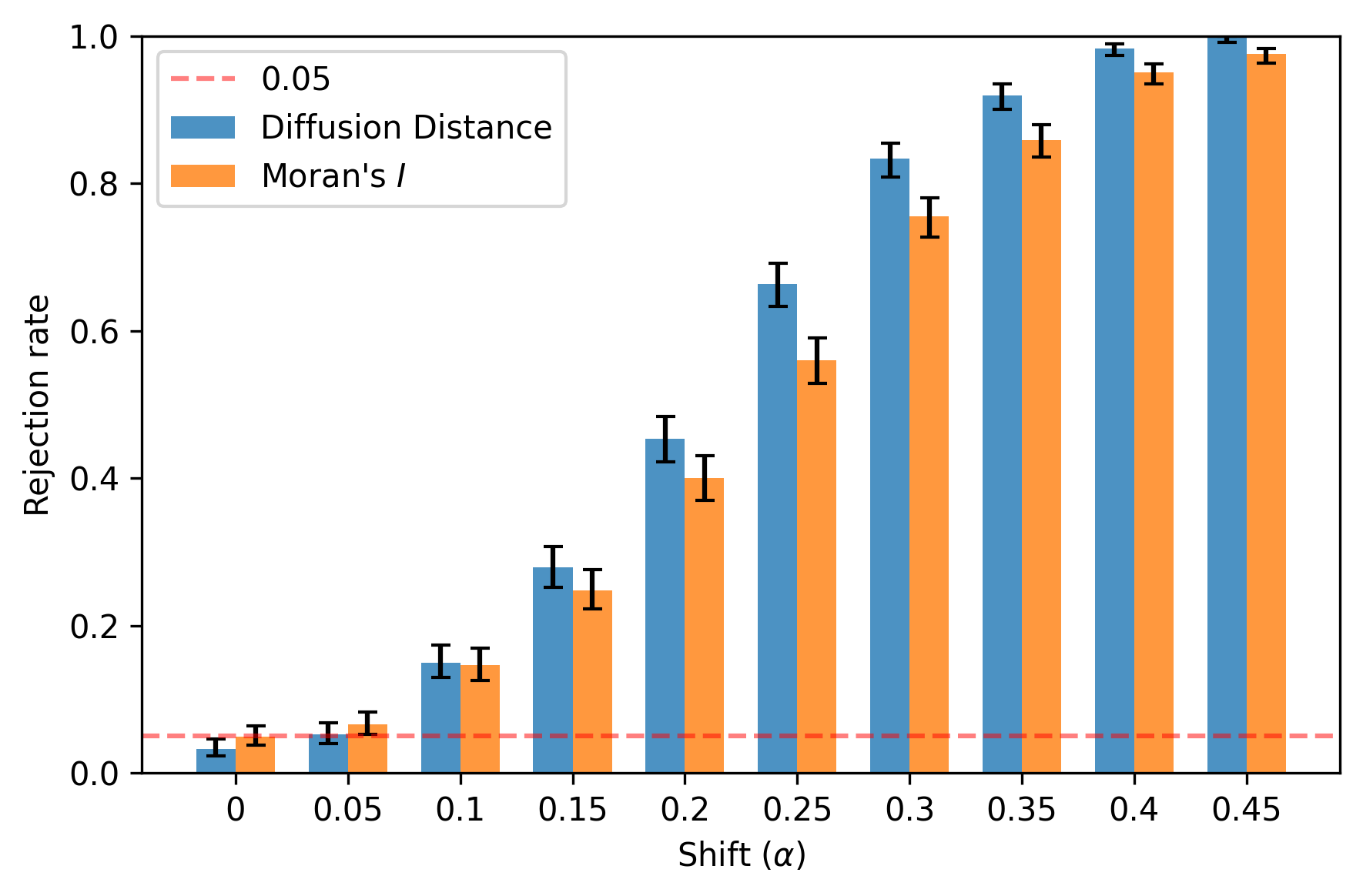}
    \caption{Rejection rates for $\epsilon = 0.001$ with the same setup as in Section~\ref{sec:power}.}
    \label{fig:power2}
\end{figure}

Census data obtained from NHGIS is used in accordance with their licence; see \url{https://www.nhgis.org/citation-and-use-nhgis-data}. We use the preprocessed city graphs published by the authors of \cite{kauba2024topological} and used in their analysis in that paper.

\section{Proof of Theorem~\ref{thm:one-step-moments-uniform}}
\label{proof:thm:one-step-moments-uniform}

\textbf{Preliminaries.}
Since $f$ is a probability vector, $\sum_{i=1}^N f_i = 1$, and therefore
\[
  \sum_{i=1}^N Z_i
  = \sum_{i=1}^N\!\Bigl(f_i - \tfrac{1}{N}\Bigr)
  = 1 - 1
  = 0.
\]
Since $P$ is bistochastic, $M := PP^\top$ is symmetric and satisfies
$M\mathbf{1} = PP^\top\mathbf{1} = P\mathbf{1} = \mathbf{1}$, so
\begin{align*}
  \sum_{i,j=1}^N M_{ij}
  &= \mathbf{1}^\top M\mathbf{1}
   = \mathbf{1}^\top\mathbf{1}
   = N, \\[4pt]
  \sum_{i,j,k,\ell=1}^N M_{ij}M_{k\ell}
  &= \Bigl(\sum_{i,j=1}^N M_{ij}\Bigr)^2
   = N^2.
\end{align*}
Also, since $M$ is symmetric, $\tr(M^2) = \sum_{i,j=1}^N M_{ij}^2$.
Because $\tau = \frac{1}{N}\mathbf{1}$ is stationary and $P\mathbf{1} = \mathbf{1}$,
\[
  f_\pi P - \tau
  = \bigl(f_\pi - \tau\bigr)P
  = Z_\pi P,
\]
and therefore
\begin{align*}
  \sqd(f_\pi P)
  &= \frac{1}{N}\|Z_\pi P\|_2^2
   = \frac{1}{N}\,Z_\pi PP^\top Z_\pi^\top \\
  &= \frac{1}{N}\,Z_\pi M Z_\pi^\top
   = \frac{1}{N}\sum_{i,j=1}^N M_{ij}\,Z_{\pi(i)}Z_{\pi(j)}.
\end{align*}

\textbf{Permutation moments.}
Because $\pi$ is uniformly distributed over all permutations of $\{1,\dots,N\}$, for any $m$ distinct positions $i_1,\dots,i_m$, the vector $(\pi(i_1),\dots,\pi(i_m))$ is uniformly distributed over all ordered $m$-tuples of distinct elements of $\{1,\dots,N\}$. We compute the following eight moments.

\medskip
\noindent\textit{(a)} Since $\pi(i)$ is uniform on $\{1,\dots,N\}$,
\[
  \mathbb{E}_\pi[Z_{\pi(i)}]
  = \frac{1}{N}\sum_{a=1}^N Z_a
  = 0.
\]

\noindent\textit{(b)} Similarly,
\[
  \mathbb{E}_\pi[Z_{\pi(i)}^2]
  = \frac{1}{N}\sum_{a=1}^N Z_a^2
  = \sqd(f).
\]

\noindent\textit{(c)} For $i \ne j$,
\[
  \mathbb{E}_\pi[Z_{\pi(i)}Z_{\pi(j)}]
  = \frac{1}{N(N-1)}\sum_{a \ne b} Z_a Z_b.
\]
Now
\[
  \sum_{a \ne b} Z_a Z_b
  = \Bigl(\sum_{a=1}^N Z_a\Bigr)^2 - \sum_{a=1}^N Z_a^2
  = 0 - N\sqd(f)
  = -N\sqd(f),
\]
so
\[
  \mathbb{E}_\pi[Z_{\pi(i)}Z_{\pi(j)}]
  = -\frac{\sqd(f)}{N-1}.
\]

\noindent\textit{(d)}
\[
  \mathbb{E}_\pi[Z_{\pi(i)}^4]
  = \frac{1}{N}\sum_{a=1}^N Z_a^4
  = \frac{S_4(Z)}{N}.
\]

\noindent\textit{(e)} For $i \ne j$,
\[
  \mathbb{E}_\pi[Z_{\pi(i)}^3 Z_{\pi(j)}]
  = \frac{1}{N(N-1)}\sum_{a \ne b} Z_a^3 Z_b.
\]
Using $\sum_{b=1}^N Z_b = 0$, so that $\sum_{b \ne a} Z_b = -Z_a$,
\[
  \sum_{a \ne b} Z_a^3 Z_b
  = \sum_{a=1}^N Z_a^3 \sum_{b \ne a} Z_b
  = \sum_{a=1}^N Z_a^3(-Z_a)
  = -S_4(Z),
\]
and therefore
\[
  \mathbb{E}_\pi[Z_{\pi(i)}^3 Z_{\pi(j)}]
  = -\frac{S_4(Z)}{N(N-1)}.
\]

\noindent\textit{(f)} For $i \ne j$,
\[
  \mathbb{E}_\pi[Z_{\pi(i)}^2 Z_{\pi(j)}^2]
  = \frac{1}{N(N-1)}\sum_{a \ne b} Z_a^2 Z_b^2.
\]
Since $\sum_{a=1}^N Z_a^2 = N\sqd(f)$,
\[
  \sum_{a \ne b} Z_a^2 Z_b^2
  = \Bigl(\sum_{a=1}^N Z_a^2\Bigr)^2 - \sum_{a=1}^N Z_a^4
  = N^2\sqd(f)^2 - S_4(Z),
\]
so
\[
  \mathbb{E}_\pi[Z_{\pi(i)}^2 Z_{\pi(j)}^2]
  = \frac{N^2\sqd(f)^2 - S_4(Z)}{N(N-1)}.
\]

\noindent\textit{(g)} For pairwise distinct $i, j, k$,
\[
  \mathbb{E}_\pi[Z_{\pi(i)}^2 Z_{\pi(j)} Z_{\pi(k)}]
  = \frac{1}{N(N-1)(N-2)}
    \sum_{\substack{a,b,c \\ \text{distinct}}} Z_a^2 Z_b Z_c.
\]
For fixed $a$, since $\sum_{b \ne a} Z_b = -Z_a$,
\begin{align*}
  \sum_{\substack{b,c \ne a \\ b \ne c}} Z_b Z_c
  &= \Bigl(\sum_{b \ne a} Z_b\Bigr)^2 - \sum_{b \ne a} Z_b^2 \\
  &= Z_a^2 - \bigl(N\sqd(f) - Z_a^2\bigr) \\
  &= 2Z_a^2 - N\sqd(f).
\end{align*}
Therefore
\begin{align*}
  \sum_{\substack{a,b,c \\ \text{distinct}}} Z_a^2 Z_b Z_c
  &= \sum_{a=1}^N Z_a^2\bigl(2Z_a^2 - N\sqd(f)\bigr) \\
  &= 2S_4(Z) - N^2\sqd(f)^2,
\end{align*}
so
\[
  \mathbb{E}_\pi[Z_{\pi(i)}^2 Z_{\pi(j)} Z_{\pi(k)}]
  = \frac{2S_4(Z) - N^2\sqd(f)^2}{N(N-1)(N-2)}.
\]

\noindent\textit{(h)} For pairwise distinct $i, j, k, \ell$,
\[
  \mathbb{E}_\pi[Z_{\pi(i)} Z_{\pi(j)} Z_{\pi(k)} Z_{\pi(\ell)}]
  = \frac{1}{N(N-1)(N-2)(N-3)}
    \sum_{\substack{a,b,c,d \\ \text{distinct}}} Z_a Z_b Z_c Z_d.
\]
Expanding $0 = \bigl(\sum_{a=1}^N Z_a\bigr)^4$ and grouping by multiplicity
pattern,
\[
  0
  = S_4(Z)
  + 4\!\sum_{a \ne b} Z_a^3 Z_b
  + 6\!\sum_{a \ne b} Z_a^2 Z_b^2
  + 12\!\sum_{\substack{a,b,c \\ \text{distinct}}} Z_a^2 Z_b Z_c
  + \sum_{\substack{a,b,c,d \\ \text{distinct}}} Z_a Z_b Z_c Z_d.
\]
Substituting the sums from (e), (f), and (g),
\begin{align*}
  \sum_{\substack{a,b,c,d \\ \text{distinct}}} Z_a Z_b Z_c Z_d
  &= -S_4(Z)
   + 4S_4(Z)
   - 6\bigl(N^2\sqd(f)^2 - S_4(Z)\bigr)
   - 12\bigl(2S_4(Z) - N^2\sqd(f)^2\bigr) \\
  &= 3N^2\sqd(f)^2 - 6S_4(Z),
\end{align*}
and therefore
\[
  \mathbb{E}_\pi[Z_{\pi(i)} Z_{\pi(j)} Z_{\pi(k)} Z_{\pi(\ell)}]
  = \frac{3N^2\sqd(f)^2 - 6S_4(Z)}{N(N-1)(N-2)(N-3)}.
\]

\textbf{First moment.}
Since $\mathbb{E}_\pi[Z_{\pi(i)}] = 0$ for all $i$, the standard identity
for expected quadratic forms gives
\[
  \mathbb{E}_\pi[Z_\pi M Z_\pi^\top]
  = \operatorname{tr}\!\bigl(M\,\operatorname{Cov}_\pi(Z_\pi)\bigr).
\]
From moments (b) and (c),
\[
  \bigl(\operatorname{Cov}_\pi(Z_\pi)\bigr)_{ij}
  =
  \begin{cases}
    \sqd(f) & i = j, \\[4pt]
    -\dfrac{\sqd(f)}{N-1} & i \ne j,
  \end{cases}
\]
which can be written as
\[
  \operatorname{Cov}_\pi(Z_\pi)
  = \frac{N\sqd(f)}{N-1}\!\left(I - \frac{1}{N}J\right),
\]
where $J = \mathbf{1}\mathbf{1}^\top$. Therefore
\begin{align*}
  \mathbb{E}_\pi[\sqd(f_\pi P)]
  &= \frac{1}{N}\operatorname{tr}\!\bigl(M\,\operatorname{Cov}_\pi(Z_\pi)\bigr) \\
  &= \frac{\sqd(f)}{N-1}
     \operatorname{tr}\!\!\left(M\!\left(I - \frac{1}{N}J\right)\right) \\
  &= \frac{\sqd(f)}{N-1}
     \!\left(\tr(M) - \frac{\tr(MJ)}{N}\right).
\end{align*}
Since $M\mathbf{1} = \mathbf{1}$, we have $\tr(MJ) = \mathbf{1}^\top M\mathbf{1} = N$, and so
\[
  \mathbb{E}_\pi\bigl[\sqd(f_\pi P)\bigr]
  = \sqd(f)\,\frac{\tr(M) - 1}{N-1}.
\]

\textbf{Second moment.}
Squaring the expression for $\sqd(f_\pi P)$ and taking expectations,
\begin{equation}
  N^2\,\mathbb{E}_\pi\bigl[\sqd(f_\pi P)^2\bigr]
  = \sum_{i,j,k,\ell=1}^N M_{ij} M_{k\ell}\,
    \mathbb{E}_\pi\!\bigl[Z_{\pi(i)} Z_{\pi(j)} Z_{\pi(k)} Z_{\pi(\ell)}\bigr].
  \label{eq:second_moment_master}
\end{equation}
The expected value on the right depends only on the multiplicity pattern
of $(i,j,k,\ell)$. We partition $[N]^4$ into five sets:
\begin{align*}
  \Gamma_4     &:= \{(i,j,k,\ell) : i=j=k=\ell\}, \\
  \Gamma_{31}  &:= \{(i,j,k,\ell) : \text{exactly three indices are equal}\}, \\
  \Gamma_{22}  &:= \{(i,j,k,\ell) : \text{indices form two distinct equal pairs}\}, \\
  \Gamma_{211} &:= \{(i,j,k,\ell) : \text{exactly one equal pair, other two distinct}\}, \\
  \Gamma_{1111}&:= \{(i,j,k,\ell) : \text{all four indices pairwise distinct}\},
\end{align*}
and write $\mathcal{T}_\alpha := \sum_{(i,j,k,\ell) \in \Gamma_\alpha} M_{ij}
M_{k\ell}$. Since these five sets partition $[N]^4$,
\[
  \mathcal{T}_4 + \mathcal{T}_{31} + \mathcal{T}_{22}
  + \mathcal{T}_{211} + \mathcal{T}_{1111} = N^2.
\]

\medskip
\noindent\textit{Computing $\mathcal{T}_4$.}
\[
  \mathcal{T}_4
  = \sum_{a=1}^N M_{aa}^2
  = \Delta.
\]

\noindent\textit{Computing $\mathcal{T}_{31}$.}
For fixed $a \ne b$, the four tuples in $\Gamma_{31}$ with repeated value
$a$ and singleton $b$ contribute $4M_{aa}M_{ab}$ (using symmetry of $M$).
Summing over all ordered pairs $a \ne b$ and using $\sum_{b \ne a}
M_{ab} = 1 - M_{aa}$,
\begin{align*}
  \mathcal{T}_{31}
  &= 4\sum_{a=1}^N M_{aa}\sum_{b \ne a} M_{ab} \\
  &= 4\sum_{a=1}^N M_{aa}(1 - M_{aa}) \\
  &= 4\bigl(\tr(M) - \Delta\bigr).
\end{align*}

\noindent\textit{Computing $\mathcal{T}_{22}$.}
For each unordered pair $\{a,b\}$ with $a \ne b$, the six tuples in
$\Gamma_{22}$ with values $\{a,a,b,b\}$ contribute $2M_{aa}M_{bb} +
4M_{ab}^2$. Summing over all such pairs,
\[
  \mathcal{T}_{22}
  = 2\sum_{a < b} M_{aa} M_{bb} + 4\sum_{a < b} M_{ab}^2.
\]
Since
\begin{align*}
  2\sum_{a < b} M_{aa} M_{bb}
  &= \tr(M)^2 - \Delta, \\
  4\sum_{a < b} M_{ab}^2
  &= 2\bigl(\tr(M^2) - \Delta\bigr),
\end{align*}
we obtain
\[
  \mathcal{T}_{22}
  = \tr(M)^2 + 2\tr(M^2) - 3\Delta.
\]

\noindent\textit{Computing $\mathcal{T}_{211}$.}
Fix pairwise distinct $a, b, c$ with $a$ the repeated value. The six
corresponding tuples in $\Gamma_{211}$ contribute $2M_{aa}M_{bc} +
4M_{ab}M_{ac}$. Thus
\[
  \mathcal{T}_{211}
  = 2\sum_{a=1}^N M_{aa}
    \sum_{\substack{b,c \ne a \\ b \ne c}} M_{bc}
  + 4\sum_{a=1}^N
    \sum_{\substack{b,c \ne a \\ b \ne c}} M_{ab} M_{ac}.
\]
For the first sum, using $\sum_{b,c} M_{bc} = N$, $\sum_b M_{ab} = 1$,
$\sum_c M_{ca} = 1$, and $\sum_{b \ne a} M_{bb} = \tr(M) - M_{aa}$,
\begin{align*}
  \sum_{\substack{b,c \ne a \\ b \ne c}} M_{bc}
  &= N - 1 - 1 + M_{aa} - \bigl(\tr(M) - M_{aa}\bigr) \\
  &= N - 2 - \tr(M) + 2M_{aa}.
\end{align*}
Therefore
\begin{align*}
  2\sum_{a=1}^N M_{aa}
  \sum_{\substack{b,c \ne a \\ b \ne c}} M_{bc}
  &= 2\sum_{a=1}^N M_{aa}\bigl(N - 2 - \tr(M) + 2M_{aa}\bigr) \\
  &= 2(N-2)\tr(M) - 2\tr(M)^2 + 4\Delta.
\end{align*}
For the second sum, since $\sum_{b \ne a} M_{ab} = 1 - M_{aa}$,
\[
  \sum_{\substack{b,c \ne a \\ b \ne c}} M_{ab} M_{ac}
  = \Bigl(\sum_{b \ne a} M_{ab}\Bigr)^2 - \sum_{b \ne a} M_{ab}^2
  = (1-M_{aa})^2 - \sum_{b \ne a} M_{ab}^2.
\]
Summing over $a$,
\begin{align*}
  \sum_{a=1}^N \sum_{\substack{b,c \ne a \\ b \ne c}} M_{ab} M_{ac}
  &= \sum_{a=1}^N\bigl(1 - 2M_{aa} + M_{aa}^2\bigr)
   - \sum_{a=1}^N\sum_{b \ne a} M_{ab}^2 \\
  &= N - 2\tr(M) + \Delta - \bigl(\tr(M^2) - \Delta\bigr) \\
  &= N - 2\tr(M) + 2\Delta - \tr(M^2).
\end{align*}
Combining both parts,
\[
  \mathcal{T}_{211}
  = 4N + (2N-12)\tr(M) - 2\tr(M)^2 - 4\tr(M^2) + 12\Delta.
\]

\noindent\textit{Computing $\mathcal{T}_{1111}$.}
By the partition identity,
\begin{align*}
  \mathcal{T}_{1111}
  &= N^2 - \mathcal{T}_4 - \mathcal{T}_{31}
     - \mathcal{T}_{22} - \mathcal{T}_{211} \\
  &= N^2 - 4N - (2N-8)\tr(M)
     + \tr(M)^2 + 2\tr(M^2) - 6\Delta.
\end{align*}

\medskip
\noindent\textit{Combining.}
Substituting moments (a)--(h) and the $\mathcal{T}_\alpha$ formulas
into~\eqref{eq:second_moment_master},
\begin{align}
  N^2\,\mathbb{E}_\pi\bigl[\sqd(f_\pi P)^2\bigr]
  &= \frac{S_4(Z)}{N}\,\mathcal{T}_4
   - \frac{S_4(Z)}{N(N-1)}\,\mathcal{T}_{31}
   + \frac{N^2\sqd(f)^2 - S_4(Z)}{N(N-1)}\,\mathcal{T}_{22}
   \notag\\
  &\quad
   + \frac{2S_4(Z) - N^2\sqd(f)^2}{N(N-1)(N-2)}\,\mathcal{T}_{211}
   + \frac{3N^2\sqd(f)^2 - 6S_4(Z)}{N(N-1)(N-2)(N-3)}\,\mathcal{T}_{1111}.
  \label{eq:second_moment_Tform}
\end{align}
We collect coefficients separately using common denominator $\mathcal{D} = (N-1)(N-2)(N-3)$.

\medskip
\noindent\textit{Coefficient of $\sqd(f)^2$.}
The $\sqd(f)^2$ contributions arise from $\mathcal{T}_{22}$,
$\mathcal{T}_{211}$, and $\mathcal{T}_{1111}$:
\begin{align*}
  &(N-2)(N-3)\,\mathcal{T}_{22}
   - (N-3)\,\mathcal{T}_{211}
   + 3\,\mathcal{T}_{1111} \\
  &\quad=
    (N^2-3N+3)\bigl(\tr(M)^2 + 2\tr(M^2)\bigr)
    - N^2 \\
  &\qquad
    - 2(N^2-6N+6)\tr(M)
    - 3N(N-1)\Delta,
\end{align*}
giving a contribution of
\[
  \frac{N\sqd(f)^2}{\mathcal{D}}
  \Bigl[
    (N^2-3N+3)\bigl(\tr(M)^2 + 2\tr(M^2)\bigr)
    - N^2
    - 2(N^2-6N+6)\tr(M)
    - 3N(N-1)\Delta
  \Bigr].
\]

\noindent\textit{Coefficient of $S_4(Z)$.}
The $S_4(Z)$ contributions arise from all five patterns:
\begin{align*}
  &(N-1)(N-2)(N-3)\,\mathcal{T}_4
   - (N-2)(N-3)\,\mathcal{T}_{31}
   - (N-2)(N-3)\,\mathcal{T}_{22} \\
  &\quad
   + 2(N-3)\,\mathcal{T}_{211}
   - 6\,\mathcal{T}_{1111} \\
  &\quad=
    -(N-1)\bigl(\tr(M)^2 + 2\tr(M^2)\bigr)
    + 2N
    - 4\tr(M)
    + N(N+1)\Delta,
\end{align*}
giving a contribution of
\[
  \frac{S_4(Z)}{N\mathcal{D}}
  \Bigl[
    -(N-1)\bigl(\tr(M)^2 + 2\tr(M^2)\bigr)
    + 2N
    - 4\tr(M)
    + N(N+1)\Delta
  \Bigr].
\]
Dividing~\eqref{eq:second_moment_Tform} by $N^2$ yields the stated formula for $\mathbb{E}_\pi[\sqd(f_\pi P)^2].$

\newpage
\end{document}